\newtheorem{theorem}{Theorem}[section]
\newtheorem{lemma}[theorem]{Lemma}
\newtheorem{question}{Question}
\theoremstyle{definition}
\newtheorem{definition}[theorem]{Definition}
\newtheorem{remark}{Remark}
\newcommand{\N}{\mathbb{N}}
\title[On Periodic Orbits of Evolutionary Games] 
      {On Arbitrarily Long Periodic Orbits of Evolutionary Games on Graphs}
\author[Jeremias Epperlein and Vladim\'{i}r \v{S}v\'{i}gler]{}
\subjclass{Primary: 91A22, 05C57, 91A43; Secondary: 37N40.}
 \keywords{Evolutionary games on graphs, game theory, periodic orbit, deterministic imitation dynamics, discrete dynamical systems.}
 \email{jeremias.epperlein@gmail.com}
 \email{sviglerv@ntis.zcu.cz}
\thanks{$^*$ Corresponding author.}
\begin{document}
\maketitle

\centerline{\scshape Jeremias Epperlein$^*$}
\medskip
{\footnotesize
 \centerline{Center for Dynamics \& Institute for Analysis}
   \centerline{Dept. of Mathematics}
   \centerline{Technische Universit\"{a}t Dresden, 01062}
   \centerline{Dresden, Germany}
} 

\medskip

\centerline{\scshape Vladim\'{i}r \v{S}v\'{i}gler }
\medskip
{\footnotesize
 \centerline{Department of Mathematics and NTIS}
   \centerline{Faculty of Applied Sciences}
   \centerline{University of West Bohemia}
   \centerline{Univerzitn\'{i} 8, 30614}
   \centerline{Pilsen, Czech Republic}
}

\bigskip

\begin{abstract}
A periodic behavior is a well observed phenomena in biological and
economical systems. We show that evolutionary games on graphs with imitation dynamics can display periodic behavior for an arbitrary choice of game theoretical parameters describing social-dilemma games. We construct graphs and corresponding initial conditions whose trajectories are periodic with an arbitrary minimal period length. We also examine a periodic behavior of evolutionary games on graphs with the underlying graph being an acyclic (tree) graph. Astonishingly, even this acyclic structure allows for arbitrary long periodic behavior.
\end{abstract}

\section{Introduction}
\label{sec:int}
Evolutionary game theory on graphs in the spirit of Nowak and May \cite{nowak1992} studies the
evolution of social behavior in spatially structured populations.
In our setting, each vertex of a graph is assigned a strategy.
In every time step, each vertex plays a matrix game with its imminent neighbors.
The resulting game utilities together with the update order (certain vertices can remain rigid) and the update function result in the change of strategy to the subsequent time step.
Here, we focus on the case of synchronous update order and deterministic imitation dynamics - every vertex copies the strategy of the most successful neighbor including itself.
From a biological point of view, it is natural to
consider a stochastic update rule
leading mathematically to a Markov chain.
This is also the approach taken by most rigorous investigations
of such systems, see for example \cite{cox2013} and \cite{allen2014}.
Randomness can also be used to introduce mutations of the individuals into the model, see for example \cite{allen2014}.
However, questions about the dynamical
behavior of these models often become intractable because of
the stochastic nature of the system.
Different authors therefore also studied deterministic versions of the model,
see for example \cite{abramson2001, duran2005, kim2002,
  masuda2003, nowak1992, tomochi2004}. The results regarding this deterministic version, however, are almost all obtained by simulations.

One of the factors influencing dynamics heavily are the parameters of the underlying matrix game.
We consider a two strategy game (Cooperation, Defection) whose interpretation leads to a natural division of the parameter space into $4$ scenarios Prisoner's dilemma, Stag hunt, Hawk and dove, Full cooperation.
The equilibria of the replicator dynamics based on matrix games with such parameters are already well known, \cite{hofbauer1998}.

In \cite{nowak1992} it was shown by simulations that even on lattices these
dynamical systems can show very complicated behavior starting
from a very simple initial condition, see also Chapter 9 in \cite{nowak2006}.
Nowak and May show for example that the systems can exhibit cascading behavior.
Most of the time, such constructions work for very specific choices of
parameters.

Our main questions is thus: Can arbitrarily long periodic behavior happen for all parameter choices?
For example, the replicator dynamics with parameters of HD scenario tend to a stable mixed equilibrium but only pure equilibria are attractive for the other scenarios.
The spatial structure of the game must be thus thoroughly examined. We will answer the question positively by
explicitly constructing the graphs demonstrating the required behavior for each set of parameters.

In this paper, we focus on evolutionary games on a graph with periodic trajectories along which the strategy profiles (number of cooperators and defectors) change.
Periodic behavior with no change in the strategy profiles was observed for example in \cite{nowak2006} by Nowak and May in a structure they called a walker (spaceship in cellular automata terms).
Moreover, such a structure is automorphism invariant in a certain sense; for each time step, there exists a graph automorphism which keeps the moving walker in one place.
Such structures may be of interest for future research. We note, that our constructions introduce a periodic behavior of arbitrary length both in strategy vectors (distribution of strategies) and strategy profiles.

Evolutionary games on graphs  also form a very interesting class of cellular automata.
Cellular automata on a lattice can take into account
the relative spatial position of a neighbor. The dependence on
the neighbor on the left might differ from the dependence on the
neighbor on the right.
On an arbitrary graph this is only possible if the edges carry some
kind of label.
A cellular automaton in which the new state of a cell depends only on its
own state and the number of neighboring cells in each state is called totalistic.
Such cellular automata are naturally defined also on unlabeled graphs.
While evolutionary games as considered here are not totalistic,
they nevertheless are defined on unlabeled graphs in an obvious way.
See \cite{marr2009} for a discussion of cellular automata on graphs.

The paper is organized as follows. We introduce basic notation and
the dynamics of evolutionary games on a graph in \cref{sec:pre}. In
\cref{sec:con}, Theorem \ref{con-thm:1}, answering the main question of
this paper, is stated and proved. The proof is carried out for two
separate cases depending on the parameter scenario. Periodic behavior
of evolutionary games on a graph with the underlying graph being a
tree is examined in \cref{sec:tre}. We conclude our results in \cref{sec:ccl}.

\section{Preliminaries}
\label{sec:pre}
We are considering undirected connected graphs $\mathcal{G}$ as the
spatial structure of our game with the vertices $V$ being players. The
interactions between vertices are defined by a set of edges $E$ (no
edge means no direct interaction). The $m$-neighborhood of the vertex
$i$ (the set of all vertices having distance to $i$ exactly $m$) is
denoted by $N_m(i)$. We also define
\begin{align*}
N_{\leq m}(v) := \bigcup_{n=1}^m N_n(v) \cup \{v\}.
\end{align*}
The strategy set of the game is simply
$S = \{ \text{"Cooperate"},\text{"Defect"} \} = \{ C,D\} =
\{1,0\}$. The neighboring vertices play a matrix game where
the resulting utilities are defined by the matrix
\begin{align*}
\begin{array}{c|cc}
  & C & D \\ \hline
C & a & b \\
D & c & d
\end{array}
\end{align*}
and the utility function $u$. For example, if player A cooperates and player B defects, player A gets utility b and player B gets utility c.  In each time step, a certain subset of players is allowed to change their strategy based on the update order $\mathcal{T}$. Finally, the strategy update is defined by a function $\varphi$. A general framework of evolutionary games on graphs was developed in \cite{eg2}. An evolutionary game on a graph can be formally defined as follows.
\begin{definition}
An \textit{evolutionary game on a graph} is a quintuple $(\mathcal{G},\pi,u,\mathcal{T},\varphi)$, where
\begin{enumerate}[(i)]
\item $\mathcal{G} = (V,E)$ is a connected graph,
\item $\pi = (a,b,c,d)$ are game-theoretical parameters,
\item $u: S^V \to \mathbb{R}^V$ is a utility function,
\item $\mathcal{T}: \mathbb{N}_0 \to 2^V$ is an update order,
\item $\varphi: (\mathbb{N}_0)^2_\geq \times S^V \to S^V$ is a dynamical system.
\end{enumerate}
\end{definition}
The strategy vector (the state of the system) will be denoted $X = (x_1, \ldots, x_{| V|}) \in S^V$.
For a
strategy vector $X \in S^V$, the strategy of the vertex $v$ is $X_v$.
The utility of a player $v$ is given by $u_v(X)$.

Our main focus lies in social dilemma games and we are thus interested
in the game-theoretical parameters $\mathcal{P}$ describing such
games. In particular, it is more advantageous if the opponent
cooperates than if it defects for each player, i.e.
\begin{align*}
\min \{ a, c \} > \max \{b, d \} \, .
\end{align*}

This results into four possible scenarios: Prisoner's dilemma (PD):
$c>a>d>b$, Stag hunt (SH): $a>c>d>b$, Hawk and dove (HD): $c>a>b>d$
and Full cooperation (FC): $a>c>b>d$. Demanding the inequalities to be
strict only excludes sets of measure zero. This \textit{generic
  payoff assumption} is common in examining game-theoretical models
(see e.g.\,\cite{broomrychtar2013}). From now on, we consider the
\textit{mean utility function}

\begin{multline*}
u_i(X) = \frac{1}{\vert N_1(i) \vert} \left( a \sum_{j \in N_1(i)} X_i X_j + b \sum_{j \in N_1(i)} X_i(1 - X_j) + \right. \\
 \left. + c \sum_{j \in N_1(i)} (1- X_i) X_j + d \sum_{j \in N_1(i)} (1 - X_i)(1 - X_j) \right)
\end{multline*}
which is an averaged sum of the outcomes of the matrix games played
with direct neighbors. Without loss of generality, we can now assume
$a=1, d=0$ and thus the parameter regions can be depicted in the plane
(see Figure \ref{pre-fig:10}).

\begin{figure}[htbp]
\centering
\includegraphics[width = .65\textwidth]{./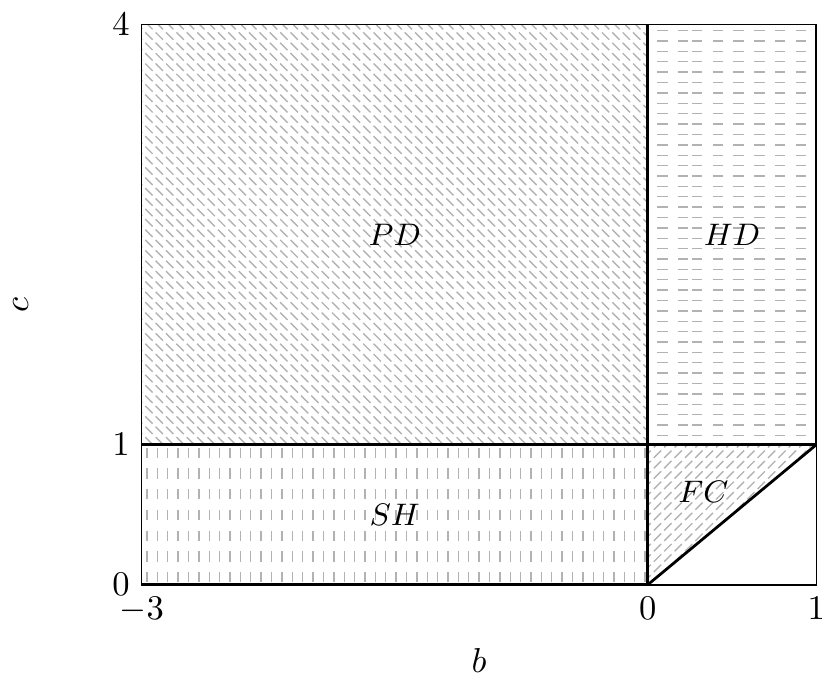}
\caption{Regions of admissible parameters $\mathcal{P}$
with normalization $a=1, d=0$.}
\label{pre-fig:10}
\end{figure}
This simplification can be done thanks to the averaging property of
the mean utility function (see \cite[Remark 8.]{eg1}). In Question \ref{pre-que:1}, Theorem \ref{con-thm:1} and Theorem \ref{tre-thm:1} we assume a
synchronous update order only, i.e.\ $\mathcal{T}(t)= V$ for all $t \in
\mathbb{N}_0$. In other words, all vertices are updated simultaneously
at every time step.  However, the definitions from Section
\ref{sec:pre} make sense for an arbitrary update order. Finally, the dynamical system $\varphi$ follows the deterministic imitation dynamics; namely, each vertex adopts the strategy of its most successful neighbor (including itself). Formally,
\begin{align*}
\varphi_i(t+1,t,X) = \left\{
\begin{array}{ll}
  X_{\text{max}} & i \in \mathcal{T}(t), \; \vert A_i(X) \vert = 1 \; \text{and} \; A_i(X) = \left\{ X_{\text{max}} \right\}  \, , \\
X_i  & \text{otherwise} \, ,
\end{array}
\right.
\end{align*}
where
\begin{align*}
A_i(x) = \left\{ X_k \, | \, k \in \text{argmax} \left\{ u_j(X) \, \vert \, j \in N_{1}(i) \cup \{ i \} \right\} \right\} \, .
\end{align*}
We refer to \cite{eg1} for further discussion on the utility function, update order and the dynamics.

Considering a deterministic dynamical system, the natural interest
lies in examining the existence and properties of \textit{fixed
  points} -- strategy vectors $X^* \in S^V $ for which $\varphi(t,0,X^*) = X^*$ for $t \in \mathbb{N}_0$. This topic was studied in \cite{eg2}. Another notion is the one of a \textit{periodic trajectory}, a periodic behavior of a game on a graph.
\begin{definition}
  Given an evolutionary game $\mathcal{E} = (\mathcal{G}, \pi, u,
  \mathcal{T},\varphi)$  on a graph and an initial
  state $X_0$, the sequence of strategy vectors
  $\mathcal{X} = (X(0),X(1), \ldots) \in \left[ S^V
  \right]^{\mathbb{N}_0}$ is called the \emph{trajectory} of $\mathcal{E} = (\mathcal{G}, \pi, u, \mathcal{T},\varphi)$ with initial state $X_0$ if
  for all $t \in \N_0$ we have
\begin{align*}
X(0) &= X_0 \, , \\
X(t+1) &= \varphi(t+1,t,X(t)).
\end{align*}
The trajectory is called \emph{periodic with period $p \in \N$} if $X(t+p) =
X(t)$ for $t \in \mathbb{N}_0$.
\end{definition}

The previous definition admits an arbitrary choice of the update order.
In general, two games $\mathcal{E}_1 = (\mathcal{G},\pi,u,\mathcal{T}_1,\varphi)$, $\mathcal{E}_2 = (\mathcal{G},\pi,u,\mathcal{T}_2,\varphi)$ with the same initial condition $X_0$ may not have the same trajectory.

Note that a vertex playing a certain strategy will keep its strategy
if surrounded by vertices playing the same strategy. Thus, it is
reasonable to define a cluster of cooperators and defectors and their
inner and boundary vertices. The inner (IC) and boundary (BC)
cooperators are defined by
\begin{align*}
V_{IC} &= \{ i \in V \; | \; X_i=1 \; \text{and} \; X_j = 1 \; \text{for all }  j \in N_1(i) \}\, , \\
V_{BC} &= \{ i \in V \; | \; X_i=1 \; \text{and there exists }  j \in N_1(i) \; \text{with }  X_j = 0 \} \,.
\end{align*}
Boundary (BD) and inner (ID) defectors are defined analogously.

The basic question we are answering in this paper can now be
formulated using the notation introduced in this section:
\begin{question}
\label{pre-que:1}
Given admissible parameters $\pi = (a,b,c,d) \in \mathcal{P}$, a
utility function $u$, an update order $\mathcal{T}$, dynamics
$\varphi$ and a number $p \in \mathbb{N}$, does there exist a
connected graph $\mathcal{G}$ and an initial state $X_0$ such that $\mathcal{X}$
is a periodic trajectory of the evolutionary game
$\mathcal{E} =(\mathcal{G},\pi,u,\mathcal{T},\varphi)$  on a graph with minimal period $p$?
\end{question}

\section{Existence of a periodic orbit of arbitrary length}
\label{sec:con}
In the following, graphs and subgraphs are denoted by
big calligraphic letters (e.g., $\mathcal{G}$), sets of
vertices are denoted by capital letters (e.g., $K$) and single
vertices are denoted by lower case letters (e.g., $v$).
\begin{theorem}
\label{con-thm:1}
Let $\pi = (a,b,c,d) \in \mathcal{P}$ be admissible parameters, $u$
the mean utility function, $\mathcal{T}$ the synchronous update order,
$\varphi$ deterministic imitation dynamics and $p \in
\mathbb{N}$. Then there exists a graph $\mathcal{G}$ and an initial
state $X_0$ such that $\mathcal{X} = (X(0), X(1), \ldots )$ is a
periodic trajectory of minimal length $p$ of the evolutionary game
$\mathcal{E}=(\mathcal{G},\pi,u,\mathcal{T},\varphi)$ on a graph with initial state $X_0$.
\end{theorem}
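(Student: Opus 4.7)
The plan is to split along the parameter regions according to which pure strategy is locally stable: either $a>c$ (the FC and SH scenarios) or $c>a$ (the PD and HD scenarios). Recording the mean utility of a cooperator whose fraction of cooperating neighbors is $\alpha$ as $b+(a-b)\alpha$ and of a defector as $d+(c-d)\alpha$, one checks that in the first case a cooperator completely surrounded by cooperators strictly dominates any defector in the same neighborhood, and symmetrically for defectors in the second case. I would fix a ``bulk'' strategy $s_{\text{bulk}}$ accordingly (cooperation if $a>c$, defection if $c>a$) and begin by building a large, highly connected clique of $s_{\text{bulk}}$-vertices whose interior vertices are provably frozen; this clique serves as a reservoir of stable payoff against which boundary comparisons can be normalised.

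The heart of the construction is a ``pulse conveyor'' attached to the bulk, consisting of a cycle $\mathcal{C}$ whose length is chosen to yield minimal period exactly $p$, decorated with pendant leaves. Along the conveyor I would arrange a single perturbation --- one vertex of the non-bulk strategy otherwise surrounded by the bulk --- and tune the number of pendant cooperators and pendant defectors at each cycle vertex so that, at the perturbation's boundary, the unique $\text{argmax}$ within each vertex's neighborhood lies on the intended ``downstream'' side. The resulting dynamics then drifts the perturbation one step per time unit around $\mathcal{C}$, returning to the starting configuration after exactly $p$ steps. Placing a distinguished sub-gadget at one vertex of $\mathcal{C}$ breaks the rotational symmetry of the cycle and certifies that the $p$ intermediate states are pairwise distinct strategy vectors, so $p$ is the minimal period rather than some proper divisor.

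The main obstacle is that every critical utility comparison has to be strict \emph{uniformly} over all admissible parameters in the scenario, not just at a specific point. I would handle this by writing each pairwise utility inequality as an affine expression in the pendant counts with coefficients depending continuously on $(a,b,c,d)$, and choosing the counts as explicit integer functions of $a,b,c,d$ large enough that the dominant term has the correct sign throughout the open scenario region. Accidental ties, which would freeze vertices and sabotage propagation, can be avoided by perturbing the pendant counts by one whenever a tie-hyperplane cuts through the chosen parameters; since each tie condition is a proper linear equation in $(a,b,c,d)$, generically chosen integer counts suffice. Verifying these robustness conditions, rather than the combinatorial design of the oscillator itself, is where I expect the technical weight of the proof to lie.
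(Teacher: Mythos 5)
Your split into the cases $a>c$ and $c>a$ and your robustness strategy (choosing integer gadget sizes depending on $(a,b,c,d)$, exploiting genericity to avoid ties) match the paper in spirit, but the core mechanism you propose --- a \emph{single} vertex of the non-bulk strategy travelling around a pendant-decorated cycle, one step per time unit --- cannot be realized by synchronous deterministic imitation dynamics, and this is fatal. Take the case $a>c$, bulk $=$ cooperation, pulse $=$ a lone defector at $v_i$ at time $t_i$. For the pulse to advance, $v_{i+1}$ must turn to defection at time $t_i$, which forces \emph{every} cooperator in the closed neighborhood of $v_{i+1}$ (itself, $v_{i+2}$, its decorations) to have utility strictly below the best defector there, which is at most $c$; for the pulse not to persist or spread backwards, $v_i$ must simultaneously return to cooperation, which forces some cooperating neighbor of $v_i$ to have utility strictly above the best defector in $N_1(v_i)\cup\{v_i\}$. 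Decorations of $v_i$ cannot do this (their utilities already had to lie below that threshold one step earlier, when $v_i$ itself was flipped, and they have only decreased since $v_i$ turned defector), and $v_{i+1}$ cannot either, so the resetter must be $v_{i-1}$, giving $u_{v_{i-1}}(t_i)>c$ (or $>$ the corresponding maximum). But at time $t_{i-2}$, when $v_{i-1}$ was itself flipped to defection, the same rule forced $u_{v_{i-1}}(t_{i-2})$ to lie strictly below that maximum, and since the mean utility of a cooperating cycle vertex with exactly one defecting cycle neighbor and frozen decorations is the same at both times, these two requirements contradict each other; variants in which the flips are triggered by defecting pendant leaves (utility $c$) instead of by the pulse fail similarly, because then the trigger is present at all times and one checks that $v_{i+1}$ flips one step too early, so the perturbation grows into a front instead of travelling. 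In the case $c>a$ (lone cooperator in a defecting bulk) the analogous bookkeeping gives, for every $i$, a forward-motion inequality of the form $u_{v_{i-1}}(\cdot)>u_{v_i}(\cdot)$ together with a no-backward-spread inequality $u_{v_{i-1}}(\cdot)\ge u_{v_i}(\cdot)$ evaluated one step later with the roles of cooperator/defector utilities interchanged; chaining these strict inequalities around the closed cycle yields a cyclically strictly decreasing sequence, which is impossible. In short, a localized one-vertex pulse cannot carry a direction around a cycle under this update rule, so the ``pulse conveyor'' does not exist for any choice of pendant counts. (A secondary problem: your pendant leaves are not frozen; e.g.\ a cooperating leaf attached to the pulse vertex sees a defecting neighbor of utility close to $c>b$ and is converted, destroying the decoration.)

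The paper avoids travelling pulses altogether. In both parameter cases it builds a chain (ladder) of complete graphs along which cooperation spreads monotonically for $p-1$ steps --- so the $p$ states are pairwise distinct simply because the set of cooperators strictly grows, no symmetry-breaking gadget needed --- and attaches a separate high-utility \emph{reset} structure (the vertices $F$ fed by the $\mathcal{S}$-gadgets in the FC/SH case, the hub $g_R$ stabilized by $g_D,g_C$ in the HD/PD case) whose utility overtakes the ladder exactly at time $p-1$ and wipes the spread region back to the initial state in a single synchronous step. If you want to repair your argument, replace the conveyor by such a spread-and-reset design; your final paragraph about choosing integer parameters uniformly over each open scenario region then applies essentially verbatim, and corresponds to the paper's explicit inequalities \eqref{eq:ac-6}--\eqref{eq:ac-10} and \eqref{ca-eq:60}--\eqref{ca-eq:30}.
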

Theorem \ref{con-thm:1} formally answers Question \ref{pre-que:1}. The proof will be carried out for the cases $a>c$ (FC and SH scenario) and $c>a$ (HD and PD scenario) separately. We construct a connected graph, define an initial state and show, that the resulting trajectory is periodic with the required minimal period length $p$.

\subsection{Proof of Theorem \ref{con-thm:1} for FC and SH scenarios}
\label{ssec:ac}
\newcommand{\setsep}{\;|\;}

\subsubsection{ The graph and initial state}
The construction of our graph depends on $p$ and three parameters $q,r,s
\in \N$ which
we will choose later. See \cref{ca-fig:a-greater-c} for an illustration of
the graph structure.
Let $\mathcal{S}$ be the bipartite graph with classes $S_1$ and $S_2$
each having $s$ vertices.
Add a vertex $h_{\mathcal{S}}$ incident with all vertices in $S_1$ and a vertex $f_{\mathcal{S}}$ incident with exactly one vertex in $S_2$.

Now take $2p-1$ copies of the complete graph with $q$ vertices, denoted
by $\mathcal{K}_{-(p-1)}, \dots,\mathcal{K}_{p-1}$, and chain them together to form a ladder-like structure.
Add one vertex $g$ connected to all vertices in $\mathcal{K}_0$.
Denote the vertices in $\mathcal{K}_n$ by $\{k_{n,\ell} \setsep \ell =
1,\dots,q\}$ for $n = -(p-1),\dots,p-1$ such that $k_{n,\ell}$ and
$k_{n+1,\ell}$ are connected by an edge for $n = -(p-1), \ldots, p-2$
and $\ell = 1,\dots,q$.
Add $q\cdot r$ many copies of $\mathcal{S}$ and denote them by $\mathcal{S}_{\ell,m}$ for $\ell = 1,\dots,q$ and $m = 1,\dots,r$.
Connect $f_{\mathcal{S}_{\ell,m}}$ to all vertices in $\{k_{n,\ell} \setsep n = -(p-1), \dots, p-1\}$ for $m = 1,\dots,r$ and $\ell = 1,\dots, q$.
We denote the graph thus obtained $\mathcal{G}$.

Set $H:=\{h_{\mathcal{S}_{\ell,m}} \setsep \ell = 1,\dots,q, \; m
= 1,\dots, r\}$. Let $I$ be the set of all neighbors of vertices in
$H$, let $J$ be the set of all neighbors of vertices in $I$ which are not already in $H$, and
set $F:=\{f_{\mathcal{S}_{\ell,m}} \setsep \ell = 1,\dots,q, \; m = 1,\dots, r\}$.
Finally set $K_n:=\{k_{n,\ell} \setsep \ell = 1,\dots,q\} \cup
\{k_{-n,\ell} \setsep \ell = 1,\dots,q\}$ for $n = 0,\dots,p-1$
and $K:=K_0 \cup \dots \cup K_{p-1}$.

Let $X_0$ be the state in which all vertices in $H \cup I \cup K_0 \cup \{g\}$ are cooperating
and all other vertices are defecting, that is,
\begin{align*}
  (X_0)_v &:=1, && v \in I \cup H \cup K_0 \cup \{g\} \, ,\\
  (X_0)_v &:=0, && v \in F \cup J \cup (K_1 \cup \dots \cup K_{p-1}) \, .
\end{align*}

\begin{figure}[htbp]
	\centering
	\includegraphics[width = 0.8\textwidth]{./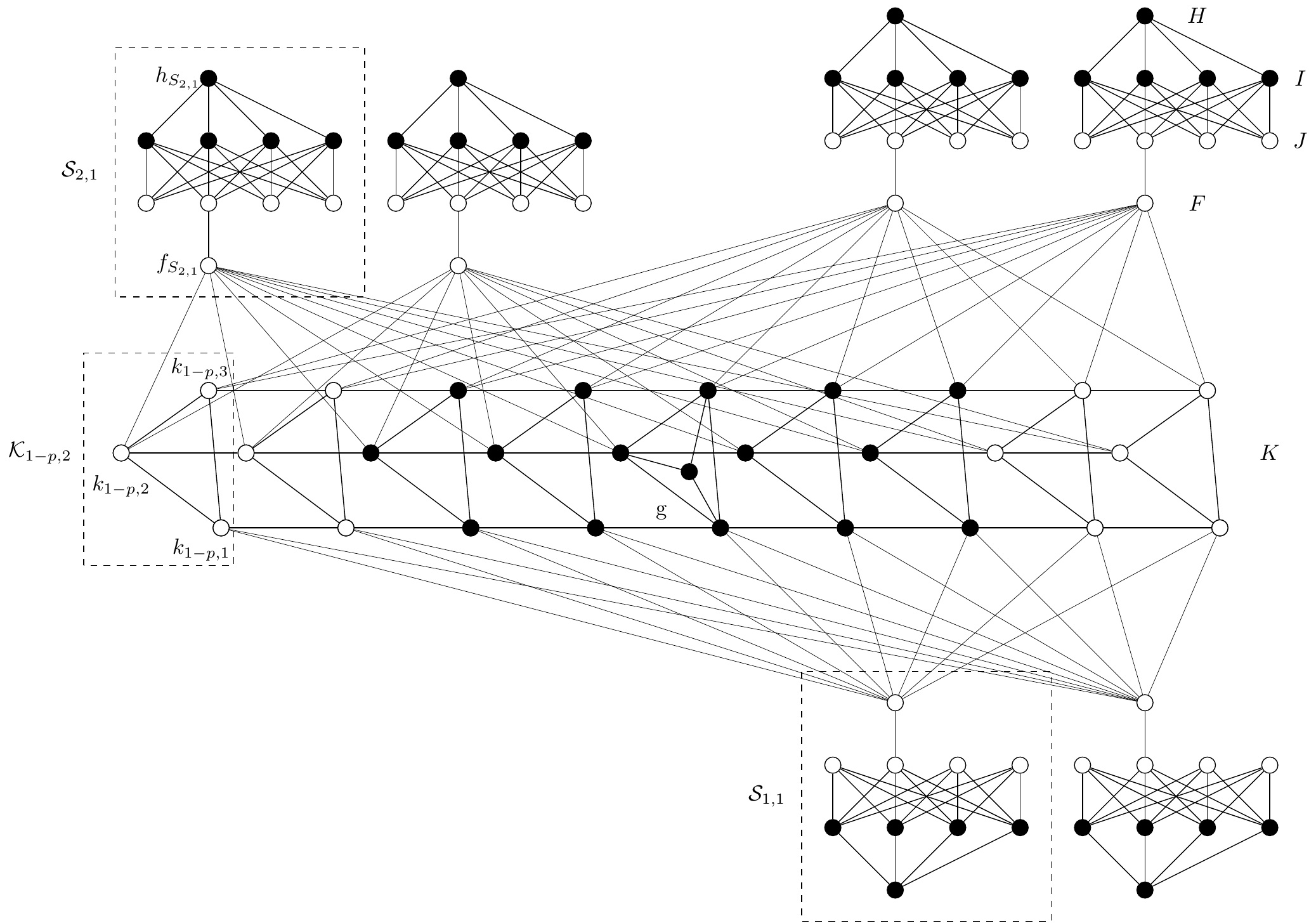}
	\caption{Example of the graph $\mathcal{G}$ with parameters $p=5$, $q=3$,
          $r=2$ and $s=4$. Cooperators are depicted by full circles. }\label{ca-fig:a-greater-c}
\end{figure}

\subsubsection{Dynamics}
Let $\mathcal{X} = (X(0),X(1),\dots)$ be the trajectory of the
evolutionary
game with parameters $(a,b,c,d)$,
synchronous update order, mean utility and imitation dynamics
on the graph $\mathcal{G}$ constructed above with initial state $X_0$.
Let $u_v(t):=u(X_v(t))$ be the utility of the vertex $v$ at time
$t$.
We will show that for suitable parameters $q,r,s$
the dynamics with initial value $X_0$ is
the following. All vertices not in $K$ do not change their strategy
and cooperation spreads along the ladder for $p$ time steps.
After $p$ time steps, we reach again the initial state $X_0$
since all vertices in $K \setminus K_0$ switch back to defection.
More precisely, for $t = 0,\dots,p-1$ we have
\begin{align*}
  X_v(t) &= X_v(0) \, ,  && v \in F \cup H \cup I \cup J \cup K_0 \cup \{g\} \, , \\
  X_v(t) &= 1 \, ,  && v \in K_{0} \cup \dots \cup K_{t} \, ,  \\
  X_v(t) &= 0 \, ,  && v \in K_{t+1} \cup \dots \cup K_{p-1} \,
\end{align*}
and $X(p)=X(0)$.
We first ensure that all vertices not in $K$ do not change their strategy.
The vertices in $H$ have utility $a$, the highest achievable one, and therefore
all vertices in $I \cup H$ always cooperate.
By the same argument, the vertex $g$ and its neighbors $K_0$ stay
cooperators at all time steps.
We want the vertices in $J$ to stay defectors by never imitating the strategy of their
neighbors in $I$, hence we want
\begin{align}  \label{eq:ac-1}
  u_j(t) &> u_i(t) \, , &t = 0,\dots,p-1, \; i \in I, \; j \in J \, .
\end{align}
Every vertex in $F$ should stay defecting. This is ensured if
\begin{align}\label{eq:ac-2}
  u_j(t) &> u_{k_n}(t) \, , &t = 0,\dots,p-1, \; j \in J, \; n = 0, \dots t, \; k_n \in K_n \, .
\end{align}
We now want cooperation to spread along the ladder for $t = 0,\dots,p-2$. At time $t$,
the vertices in $K_{t+1}$ should copy the strategy from the vertices in $K_t$ and all
vertices in $K_n$ with $n = 0,\dots, t$ should keep cooperating.
Defecting vertices without cooperating neighbors always have a lower utility
than defecting vertices with cooperating neighbors, hence the later
never imitate the former.

It is therefore sufficient to have
\begin{align}\label{eq:ac-3}
  u_{k_n}(t) &> u_f(t) \, , & t = 0, \dots, p-2, \; n = 0, \dots, t, \; k_n \in K_n, \; f \in F \, ,\\
\label{eq:ac-4}
u_{k_t}(t) &> u_{k_{t+1}}(t) \, , & t = 0, \dots, p-2, \; k_t \in K_t, \; k_{t+1} \in K_{t+1} \, .
\end{align}
In the time step from $p-1$ to $p$, we want the big reset to occur.
The utility of the vertices in $F$ should be greater than the utilities of all vertices
in $K$, in other words,
\begin{align}\label{eq:ac-5}
  u_{f}(p-1) &> u_k(p-1)\, ,&  f \in F, \; k \in K \, .
\end{align}

\subsubsection{Bounds for the utilities and the resulting inequalities}
We now give bounds for the utilities involved in the inequalities \eqref{eq:ac-1}
to \eqref{eq:ac-5}.
\begin{align*}
 u_j(t) &> \frac{s c + d}{s+ 1} \, , && t = 0,\dots, p-1, \; j \in J \, , \\
 u_i(t) &= \frac{a+s b}{s + 1} \, , && t = 0,\dots, p-1, \; i \in I \, , \\
 u_{f}(t) &< \frac{(2p-3)c+3d}{2p} \, , && t = 0,\dots, p-2, \; f \in F \, , \\
 u_{f}(p-1) &= \frac{ (2p-1)c + d}{2p} \, , && f \in F \, , \\
  u_{k_n}(t) &< \frac{ (q+2) a + r b}{q+2+ r} \, ,
  && t = 0,\dots, p-1, \;
    n = 0,\dots,t, \;
    k_n \in K_n \, , \\
  u_{k_n}(t) & > \frac{ q a + (r+2) b}{q+2+ r} \, ,
  && t = 0,\dots, p-2, \;
    n =  0,\dots,t, \;
    k_n \in K_n \, , \\
  u_{k_{t+1}}(t) &<\frac{ c + (q+r-1) d}{q + r}\, ,
  && t = 0,\dots, p-2, \;
    k_{t+1} \in K_{t+1} \, , \\
  u_{k}(p-1) & < \frac{ (q+2) a + r b}{q+r+2} \, ,
  && k \in K \, .
\end{align*}

A set of inequalities sufficient for \eqref{eq:ac-1} to \eqref{eq:ac-5} to hold is therefore given by
\begin{align}
  \frac{s c+d}{s+1} &> \frac{a+s b}{s+1} \, , \label{eq:ac-6}\\
  \frac{s c+d}{s+1} &> \frac{(q+2)a +r b}{q+r+2} \, ,\label{eq:ac-7} \\
  \frac{q a+(r+2) b}{q+r+2} &> \frac{(2p-3)c+3d}{2p} \, ,\label{eq:ac-8} \\
  \frac{q a+(r+2) b}{q+r+2} &> \frac{c+(q+r-1)d}{q+r} \, , \label{eq:ac-9}\\
  \frac{(2p-1) c+d}{2p} &> \frac{(q+2) a+r b}{q+r+2} \, . \label{eq:ac-10}
\end{align}

\subsubsection{Choosing parameters}

We start by choosing $r$ and $q$ in order to satisfy the inequalities \eqref{eq:ac-8} -- \eqref{eq:ac-10}.
Since $c>d$ we also have $ \frac{(2p-1) c+d}{2p} > \frac{(2p-3)c+3d}{2p}$ and
$ \frac{(2p-1) c+d}{2p} - \frac{(2p-3)c+3d}{2p} = \frac{c-d}{p}>0$.
Choose $m$ large enough such that $\frac{6(a-b)}{m+2} < \frac{c-d}{p}$ and $\frac{c+(m-1)d}{m}<\frac{(2p-3)c+3d}{2p}$.
We can then find $r \in \{1,\dots,m-1\}$ and set $q:=m-r$ such that
\begin{align*}
  \frac{(2p-1)c+d}{2p} > \frac{(q+2)a+rb}{m+2} > \frac{q a+(r+2)b}{m+2} > \frac{(2p-3)c+3d}{2p} \, .
\end{align*}
This directly implies  \eqref{eq:ac-8} and \eqref{eq:ac-10}.
The inequality  \eqref{eq:ac-9} follows from
\begin{align*}
  \frac{(2p-3)c+3d}{2p} > \frac{c+(q+r-1)d}{q+r} \, .
\end{align*}
Finally choose $s$ large enough such that \eqref{eq:ac-6} is fulfilled and such that
\begin{align*}
  \frac{s c+d}{s+1} > \frac{(2p-1)c+d}{2p} \, ,
\end{align*}
which implies  \eqref{eq:ac-7} by \eqref{eq:ac-10}. \qed

\subsection{Proof Theorem \ref{con-thm:1} for HD and PD scenarios}
\label{ssec:ca}
\subsubsection{The graph and initial state}
Let us define a graph $\mathcal{G}$ depending on $p$ and four
parameters $o,q,r,s \in \N$ as depicted in \cref{ca-fig:05}. We start
with $p$ complete graphs $\mathcal{K}_1, \ldots, \mathcal{K}_p$ on $o$
vertices. Again, the subgraphs $\mathcal{K}_n$ for $n = 1, \ldots, p$
are connected in series to form a ladder-like structure. There is a
subgraph $\mathcal{K}_{p+1}$ which is a complement of a complete graph
on $o$ vertices (isolated vertices) connected to the ladder in the
same manner. The vertices of $\mathcal{K}_n$ are denoted $k_{n,m}$ for
$n=1, \ldots, p+1$ and $m = 1, \ldots, o$, forming the sets $K_n$. Every vertex in the
interior of the ladder (the vertices in ${K}_2$ to ${K}_p$) is
connected to a vertex $g_R$. Additionally, the vertex $g_R$ has $q+1$
other neighbors. It has $q$ neighboring
vertices of degree one forming the set $H$ and a neighbor which we
call $g_D$. The vertex $g_D$ has $r$ neighboring vertices of degree
one forming the set $I$ and $s$ neighboring vertices of degree two forming
the set $J$. Each vertex in $J$ is connected to a vertex $g_C$.

Let $X_0$ be the initial state defined by
\begin{align*}
(X_0)_v &= 1 \, , && v \in K_1 \cup J \cup \{ g_C \} \, , \\
(X_0)_v &= 0 \, , && v \in \bigcup_{n=1}^{p+1} K_n \cup H \cup I \cup \{ g_D, g_R \} \, .
\end{align*}

\subsubsection{Dynamics}
Let $\mathcal{X} = (X(0),X(1),\dots)$ be the trajectory of the
evolutionary
game with parameters $(a,b,c,d)$,
synchronous update order, mean utility and imitation dynamics
on the graph $\mathcal{G}$ constructed above with initial state $X_0$.
We will show that for suitable parameters $o, q,r,s$
the dynamics with initial value $X_0$ is
the following.
Cooperation spreads along the ladder of vertices in
$\mathcal{K}_n$ to $\mathcal{K}_p$ and at time
$t=p-1$ the strategy of all vertices of
$\mathcal{K}_2$ to $\mathcal{K}_p$ is reset to defection. Formally
\begin{align}
X_v(t) &= 1\, , && v \in \bigcup_{n=1}^{t+1} {K}_n \, , \label{eq:c-greater-a-dyn-a}\\
X_v(t) &= X_v(t-1)\, , && \text{otherwise} \label{eq:c-greater-a-dyn-b}\, ,
\end{align}
for $t = 1, \ldots, p-1$ and $X(t+p) = X(t)$ for $t \in
\mathbb{N}_0$. See again \cref{ca-fig:05} for an illustration.
\begin{figure}[htbp]
\centering
\includegraphics[width = 0.9\textwidth]{./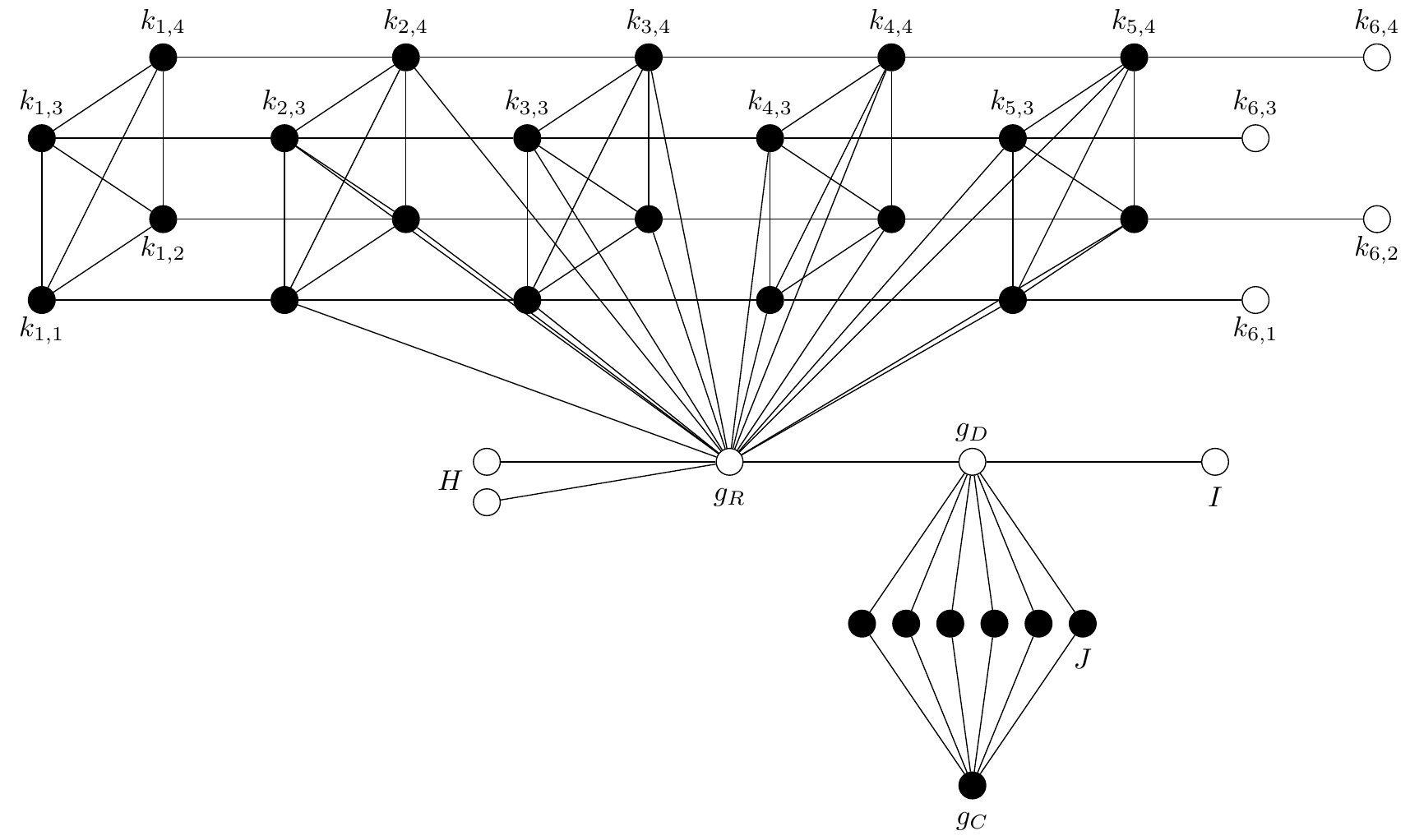}
\caption{Example of the graph $\mathcal{G}$ with parameters $p=5,o=4,s=6,r=1,q=2$ with strategy vector $X(4)$. Cooperators are depicted by full circles. Note, that this graph exhibits periodic behavior as described in Section \ref{ssec:ca} for $(a,b,c,d)=(1,0.45,1.24,0)$.}
\label{ca-fig:05}
\end{figure}

The following conditions must be satisfied in order for $\mathcal{X}$
to fulfill \eqref{eq:c-greater-a-dyn-a} and \eqref{eq:c-greater-a-dyn-b}.
\begin{itemize}
\item
  The vertices $g_R, g_C, g_D$ and all vertices in $H,I,J$ keep
  their strategy. The defector $g_D$ must prevent the vertex $g_R$
  from changing its strategy, must not change its own strategy and
  must not change the strategy of the cooperators in $J$. This is guaranteed by satisfying the inequalities
\begin{align}
\frac{(o+1)a+b}{o+2} < \frac{s c + (r+1) d}{s+r+1} < a \, ,
\label{ca-eq:60}
\end{align}
where $a$ is the utility of the vertex $g_C$ and the fraction on the
left hand side of \eqref{ca-eq:60} is an upper bound for the utilities
of the cooperating neighbors of $g_D$ and $g_R$.

\item
For the cooperation to spread at time $t$, $t=0, \ldots, p-2$, the
boundary cooperators in ${K}_{t+1}$ must have greater utility
than the boundary defectors in ${K}_{t+2}$, that is,
\begin{align}
\min \left\{ \frac{(o-1)a+b}{o}, \frac{o a + 2b}{o+2} \right\} > \frac{c + (o+1)d}{o+2} \, .
\label{ca-eq:10}
\end{align}
Here the first term on the left
is the utility of the cooperators in ${K}_1$ at $t=0$ and the second term is the utility of boundary cooperators in subsequent time steps.

\item The vertex $g_R$ must not be stronger than the cooperators in
  ${K}_m$ for $t =0, \ldots, p-2$ and $1 \leq m \leq t+1$ for
  the cooperation to be able to spread, hence
\begin{align}
\frac{oa + 2b}{o+2}>\frac{(p-2)oc+(o+q+1)d}{(p-1)o+q+1} \, .
\label{ca-eq:20}
\end{align}
Simultaneously, the defecting vertex $g_R$ must be able to change the
strategy of all neighboring cooperators to defection at time $p-1$, thus
\begin{align}
a <\frac{(p-1)oc+(q+1)d}{(p-1)o +q +1} \, .
\label{ca-eq:30}
\end{align}
\end{itemize}

\subsubsection{Choosing parameters}
We now show that there exists a choice of parameters $o,q,r,s \in \mathbb{N}$ such that the inequalities \eqref{ca-eq:60} -- \eqref{ca-eq:30} are satisfied.

Without loss of generality, we assume $a=1, d=0$ (see \cite{eg1},
Remark 8.). Since the denominators in \eqref{ca-eq:20},
\eqref{ca-eq:30} are positive, we can multiply both sides by the
product of the denominators and express $q$ in the terms of $o$. The inequality \eqref{ca-eq:20} gives
\begin{align}
q > \frac{o^2\left( (1-p)(1-c)-c \right)-o\left( 2(p-1)b - 2(p-2)c +1\right)-2b}{o+2b}
\label{ca-eq:40}
\end{align}
and the inequality \eqref{ca-eq:30} gives
\begin{align}
q < o(p-1)(c-1)-1 \, .
\label{ca-eq:50}
\end{align}
If we depict both inequalities in the first quadrant of the $o$-$q$
plane, the inequality \eqref{ca-eq:40} is satisfied above the line
given by the function on the right hand side. The function on the right
hand side asymptotically approaches the line with slope
\begin{align*}
\sigma_1 = (1-p)(1-c)-c \, .
\end{align*}
The inequality \eqref{ca-eq:50} is satisfied below the line with positive slope
\begin{align*}
\sigma_2 = (p-1)(c-1) > 0 \, .
\end{align*}
The difference of the slopes $\sigma_2-\sigma_1 =c>0$ is always
positive and  we are therefore able to find $o,q \in \mathbb{N}$ such
that the inequalities \eqref{ca-eq:20} and \eqref{ca-eq:30} are satisfied, see Figure \ref{ca-fig:10}. Furthermore, we can choose $o,q$ arbitrarily big.
\begin{figure}
\centering
\includegraphics[width=\textwidth]{./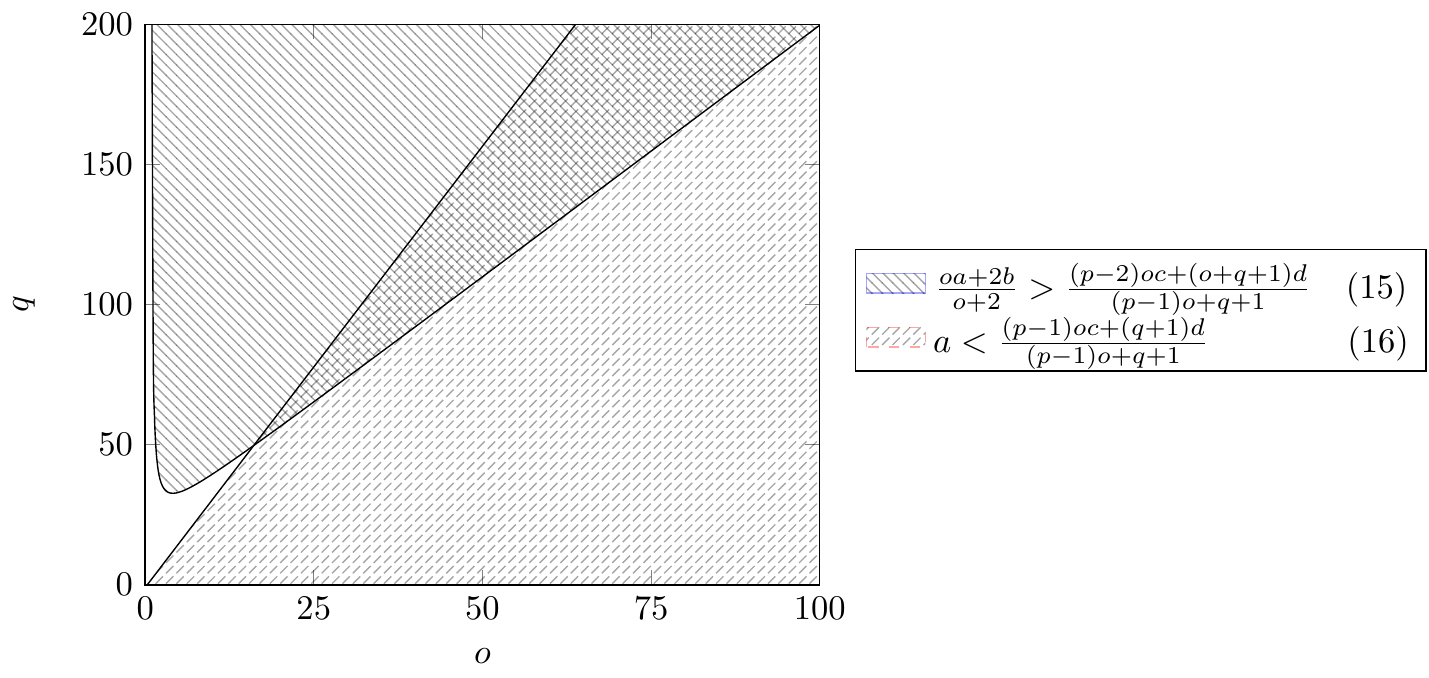}
\caption{Regions of parameters $o,q$ satisfying the inequalities \eqref{ca-eq:20}and \eqref{ca-eq:30}. The regions are depicted for  $(a,b,c,d)=(1,-0.45,1.35,0)$ and $p=10$.}
\label{ca-fig:10}
\end{figure}
Since $a>d$ holds, the number $o$ can be chosen great enough such that \eqref{ca-eq:10} is satisfied.

Since $c>a>d$ holds, we can find integers $r,s \in \mathbb{N}$ (possibly very big ones) such that \eqref{ca-eq:60} is satisfied (implicitly using the density of rational numbers and the fact that our parameters are generic).

Thus, we are able to find parameters $o,q,r,s \in \mathbb{N}$ such
that the equations \eqref{ca-eq:60} -- \eqref{ca-eq:30} are satisfied
and subsequently, $\mathcal{X}$ is a periodic trajectory of
$\mathcal{E} = (\mathcal{G},\pi,u,\mathcal{T},\varphi)$ with initial
state $X_0$ having minimal period length $p$. \qed

\section{Periodic orbits on an acyclic graph}
\label{sec:tre}
Interestingly, periodic behavior of an evolutionary game on a
graph can be observed even in the case when the underlying graph is a
tree. The absence of cycles demands a new view on the periodic
dynamics since the information (strategy change) can spread
only gradually through the graph; for example there is no way of
"resetting" vertex strategies. Nevertheless, for specific parameter
regions arbitrary long periodic behavior can occur.
\begin{theorem}
\label{tre-thm:1}
Let $\pi = (a,b,c,d) \in \mathcal{P}$ be admissible parameters
satisfying the conditions of the HD scenario, $c>a>b>d$, $u$ the mean
utility function, $\mathcal{T}$ the synchronous update order,
$\varphi$ deterministic imitation dynamics and $p_0 \in
\mathbb{N}$. There exists an acyclic graph $\mathcal{G}$, a number
$p \in \mathbb{N}_0$ such that $p \geq p_0$ and an initial state $X_0$
such that $\mathcal{X} = (X(0), X(1), \ldots )$ is a periodic
trajectory of minimal length $p$ of the evolutionary game
$\mathcal{E}=(\mathcal{G},\pi,u,\mathcal{T},\varphi)$ on a graph with
initial state $X_0$.
\end{theorem}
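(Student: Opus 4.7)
The plan is to construct a caterpillar-type tree whose backbone is a path of length scaling with $p_0$, together with pendant subtrees at each backbone vertex used to tune local utilities. First I would take a path $v_0, v_1, \dots, v_N$ with $N$ chosen in terms of $p_0$, and attach to each $v_i$ a cluster of small pendant trees whose sizes are selected so that, because the mean utility $u$ averages over all neighbours, the utility of any backbone vertex can be pushed arbitrarily close to any of $a, b, c, d$ without violating the HD ordering $c>a>b>d$. Pendants with a large central core of same-strategy leaves can act as ``ballast'', locking their interiors in a fixed strategy while serving as a knob realising any desired utility comparison at the associated backbone vertex.

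With this freedom I would design an initial state with a single defector at $v_0$ and all other backbone vertices cooperating, and arrange for a soliton-like pulse that walks along the backbone at one vertex per time step. The inequality $c>a$ already makes the lone defector the highest-utility neighbour of $v_1$, so $v_1$ imitates and switches; by tuning the pendants at each $v_i$ one can arrange that after the step, the newly defecting $v_i$ dominates the neighbourhood of $v_{i+1}$ while $v_{i-1}$, now deprived of cooperators on the backbone side, ceases to dominate and reverts to cooperation. To produce a periodic trajectory rather than a transient one, I would either make the construction symmetric so the pulse bounces off $v_N$ and returns, or embed a small oscillator gadget at one end whose intrinsic period-two dynamics continually re-launches the pulse. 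Either way the period is $\Theta(N)$, which we choose to be at least $p_0$.

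The main obstacle is exactly the regeneration step rendered impossible in the cyclic construction of \cref{con-thm:1} by acyclicity: no vertex may be globally attached to all backbone vertices without creating cycles, so cooperation behind the pulse must be restored by purely local dynamics. This forces the pendant subtrees to do double duty, tuning utilities while the pulse passes and then acting as local cooperation reservoirs that refill each backbone vertex on the following step. Verifying that one subtree design supports both roles, and that the advancing boundary and the reflection or oscillator gadget stay synchronised throughout a full round trip, is where most of the case analysis will lie. Once the propagation lemma is established, minimality of the period follows by tracking the unique moving defector--cooperator boundary on the backbone and observing that it needs time $\Theta(N)\geq p_0$ to return to its starting position.
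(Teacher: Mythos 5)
There is a genuine gap, and it sits exactly at the step you defer to ``case analysis'': the immediate restoration of cooperation behind the pulse is incompatible with deterministic imitation dynamics in the HD scenario. For the pulse head $v_i$ to convert $v_{i+1}$ at the next step, $v_i$ must have the strictly largest utility in the closed neighbourhood of $v_{i+1}$, which (since $c>a$ and $v_i$ must be propped up by cooperating neighbours) means its utility is close to $c$, i.e.\ above anything a cooperator can ever achieve (at most $a$). But $v_i$ lies in the closed neighbourhood of $v_{i-1}$ as well, so at that same step $v_{i-1}$ imitates $v_i$ and keeps defecting; no pendant ``reservoir'' cooperator can outbid a utility near $c$. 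Hence the trailing vertex cannot revert one step after the head passes, and the defecting region grows rather than translating as a soliton. The same issue undermines the ``ballast'' idea: a defecting backbone vertex adjacent to a large cooperating pendant cluster has utility near $c$ and therefore invades that cluster instead of being locked out of it. Finally, the two devices you invoke to close the orbit --- a reflection at $v_N$ or an ``intrinsic period-two oscillator gadget'' on a tree --- are asserted, not constructed, and neither is known to exist for these dynamics; producing one is essentially the whole difficulty of the theorem.

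The paper's proof avoids the soliton picture entirely: it takes a rooted $r$-ary tree and an initial state with a large central cooperating ball, and lets the orbit ``breathe''. During a shrinking phase defection spreads both toward the root and toward the leaves; only afterwards, once the high-utility defector--cooperator interfaces have moved on and defectors sit among defectors (utility near $d$), does cooperation regrow from the surviving central cluster. The recovery is thus delayed by a whole phase rather than happening one step behind a moving front, and it is controlled by two local lemmas (a boundary cooperator with utility $\frac{a+rb}{r+1}$ converting $r$ weak defecting neighbours, cf.\ Lemma \ref{lem:claim1}, and a boundary defector with utility $\frac{rc+d}{r+1}>a$ converting its neighbourhood, cf.\ Lemma \ref{lem:claim2}) plus a global induction over the phases (Theorem \ref{tre-thm:02}), giving minimal period $2(q-3)\geq p_0$. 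If you want to salvage your approach, you would have to redesign it so that recovery behind the front is likewise delayed until the front's utility has collapsed, at which point you are essentially reconstructing the paper's two-phase mechanism.
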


\subsection{Proof of \cref{tre-thm:1}}.
\label{tre-sec:proof}
\subsubsection{The graph and initial state}
Let us define a graph $\mathcal{G}$ whose structure is dependent on
two parameters $q,r$. The graph $\mathcal{G}$ is a rooted $r$-nary
tree such that
\begin{itemize}
\item the root $h_0$ has only one child $h_1$,
\item every vertex in level $1$ to $q-2$ has exactly $r$ children,
\item exactly $r^2$ vertices in level $q-1$ with pairwise different
  predecessors at level $3$ are leaves,
\item every other vertex in level $q-1$ has $r$ children which are leaves.
\end{itemize}
See \cref{tre-fig:05} for an illustration.

For the sake of simplicity, we focus only on one branch of the tree
$\mathcal{G}$ rooted in a fixed vertex $h_3$ at level $3$. The
vertices in the other branches follow the same dynamics by symmetry
reasons (the initial state and the graph $\mathcal{G}$ are invariant with respect to an automorphism exchanging the whole branches rooted at level $3$).
The descendant of $h_1$ at level $q-1$ in the fixed branch which is a
leaf is denoted by $h_{q-1}$. The
vertices in a path from $h_1$ to $h_{q-1}$ will be denoted
by $h_1, h_2, \ldots, h_{q-1}$ in an increasing manner. The vertices in
$\{ h_1, \ldots, h_{q-1}\} = H$ are called \textit{special
  vertices}. The set of all descendants of $h_m$ for
$m = 3, \ldots, q-2$ which are not in $H$  will be denoted by $I_m$. Vertices in
$I:= \bigcup_{m=3}^{q-2} I_m$ are called \textit{ordinary vertices}.

Let the initial condition $X_0$ be such that every vertex in levels
$0, \ldots, q-2$ is cooperating and every other vertex is defecting,
that is,
\begin{align*}
(X_0)_v &= 1 \, , && v \in N_{\leq q-2}(h_0) \, , \\
(X_0)_v &= 0 \, , && \text{otherwise} \, .
\end{align*}

See Figure \ref{tre-fig:05} for illustration of the graph construction and initial condition.

\begin{figure}
\includegraphics[width=\textwidth]{./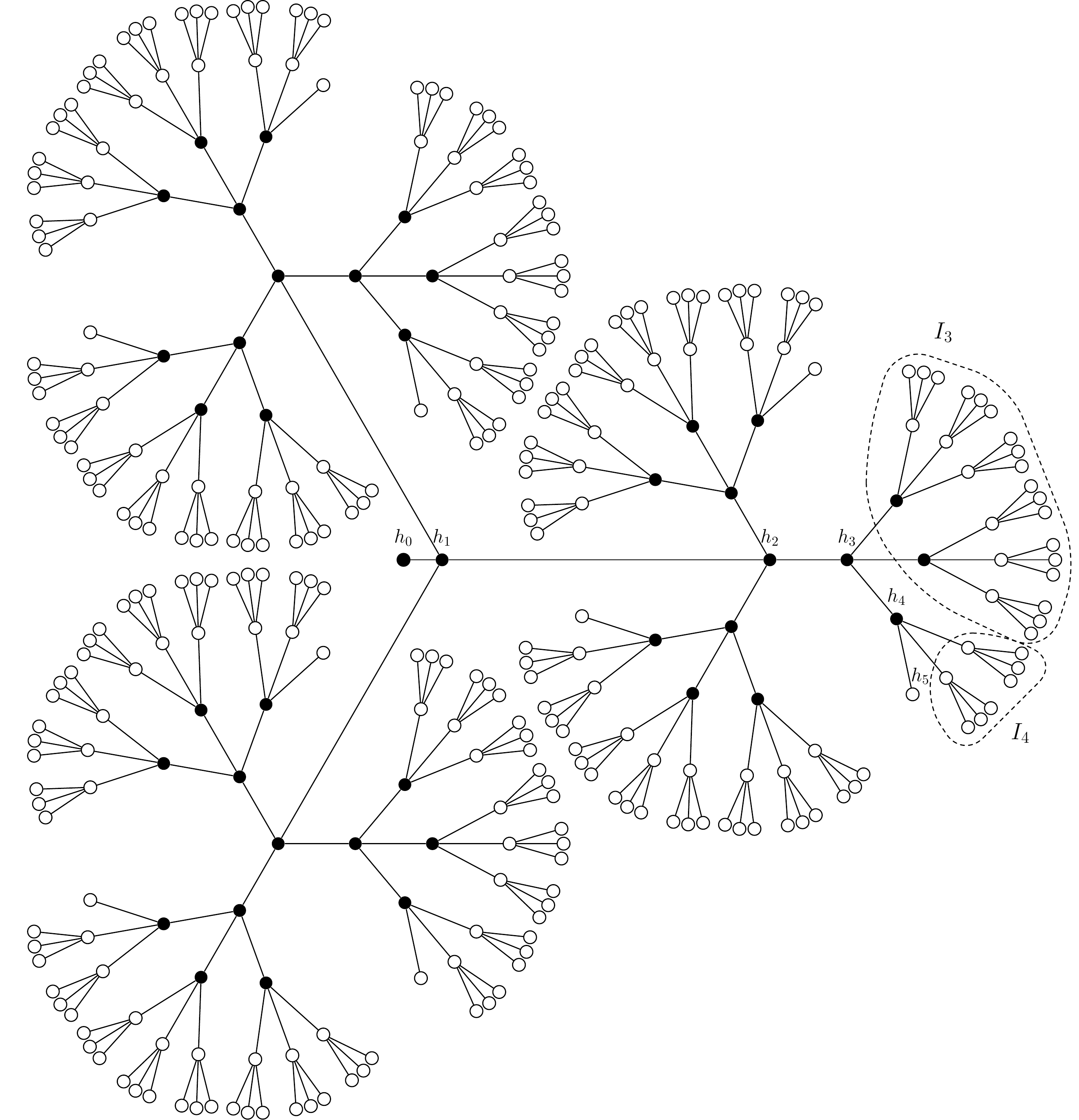}
\caption{Example of the graph constructed in the proof of Theorem
  \ref{tre-thm:1} with an initial condition. The cooperators are
  depicted by filled black circles, defectors by white ones. The parameters are $r=3, q=6$}
\label{tre-fig:05}
\end{figure}

\subsubsection{The dynamics}

The dynamics of the system in one period can be divided into three qualitatively different phases.
There are three important events that occur during one period.  At
time $t=0$, all vertices at level at most $q-2$ cooperate and all
vertices at the levels $q-1$ and $q$ defect.  From here, defection is
spreading along the special vertices to the root and outward towards
the boundary along the ordinary vertices. We call this phase the
\emph{shrinking phase}. At time step $t=q-3$, the only special vertices
which cooperate are those at level $0$ and $1$. There are however a
few clusters of cooperating ordinary vertices left in the higher
levels.  Starting from the root, the central cooperating cluster is
growing again and at time $t=(q-3)+(q-5)$ there is only this central
cluster of cooperators left which encompasses all vertices at level at
most $q-4$.  Two time steps later, at $t=2(q-3)$, this cluster
encompasses all vertices at level at most $q-2$ and we are back at the
initial state.
Please refer to the example in Section \ref{tre-sec:example} and
Figures \ref{tre-fig:ex0}--\ref{tre-fig:ex5} for an illustration of
the dynamics.

The local dynamics is essentially governed by the following two
lemmas.

\begin{lemma}\label{lem:claim1}
	Consider parameters $(a,b,c,d) \in \mathcal{P}$ such that
	\begin{align}
	\label{tre-eq:10}
	\frac{a+rb}{r+1} > \frac{c+rd}{r+1} \, .
	\end{align}
	Let $i$ be a vertex which is a boundary cooperator at time $t$.
	If $i$ is connected to one cooperator and $r$ boundary defectors, whose
	defecting neighbors have utility lower that $\frac{a+rb}{r+1}$
        and whose only cooperating neighbor is $i$, then
	$i$ and all of its defecting neighbors will cooperate in the
	next time step.
	\begin{figure}[htbp]
		\centering
		\includegraphics[width=0.8\textwidth]{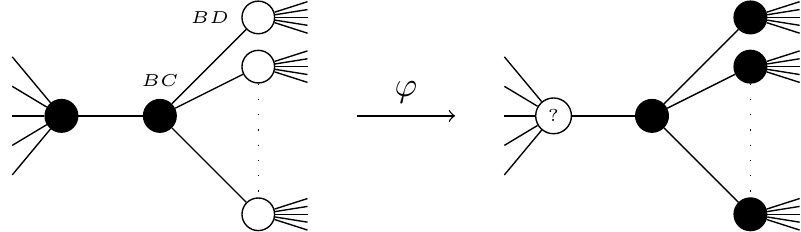}
		\caption{Illustration of the local situation in
                  \cref{lem:claim1}. Generally, nothing can be stated
                  about the behavior of the
			cooperating neighbor on the left.}
		\label{tre-fig:10}
	\end{figure}
\end{lemma}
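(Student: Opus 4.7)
The plan is to compute the utilities of $i$ and of each of its $r$ defecting neighbours directly from the mean-utility formula, and then verify the imitation rule vertex by vertex. Implicit in the lemma (and visible in \cref{tre-fig:10}) is the assumption that each of the boundary defectors $j$ adjacent to $i$ has degree $r+1$, so that besides $i$ its remaining $r$ neighbours are precisely the defectors whose utilities are bounded by the hypothesis; this condition holds automatically for the applications of the lemma inside the $r$-ary tree $\mathcal{G}$.

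First I would compute $u_i(t)$ and $u_j(t)$. Since $i$ is a cooperator with one cooperating and $r$ defecting neighbours, a direct substitution in the mean-utility formula gives $u_i(t) = (a+rb)/(r+1)$. For each defecting neighbour $j$ of $i$, the only cooperating neighbour of $j$ is $i$ and the remaining $r$ neighbours of $j$ are defectors, hence $u_j(t) = (c+rd)/(r+1)$. Hypothesis \eqref{tre-eq:10} then gives the strict inequality $u_j(t) < u_i(t)$.

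I would next verify the two transitions separately. For $j$: the neighbourhood $N_1(j) \cup \{j\}$ consists of $i$ (a cooperator with utility $(a+rb)/(r+1)$), the vertex $j$ itself (with strictly smaller utility $(c+rd)/(r+1)$), and the $r$ defecting neighbours of $j$ whose utilities are all strictly below $(a+rb)/(r+1)$ by assumption. Therefore $i$ is the unique maximiser, its strategy is cooperation, and the imitation rule forces $j$ to cooperate at time $t+1$. For $i$: the neighbourhood $N_1(i) \cup \{i\}$ contains $i$ (cooperator, utility $(a+rb)/(r+1)$), the unspecified cooperating neighbour, and the $r$ defectors just shown to have strictly smaller utility. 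Whatever the utility of the other cooperating neighbour happens to be, every vertex attaining the maximum utility in $N_1(i)\cup\{i\}$ is a cooperator, so $A_i(X(t))=\{1\}$ and $i$ remains a cooperator.

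The proof is essentially an exercise in arithmetic and I do not expect serious obstacles. The one point requiring a touch of care is that, as stated, the hypothesis bounds only the utilities of the defecting neighbours of $j$ and is silent on $u_j$ itself; to close this gap I rely on the implicit degree assumption to pin down $u_j(t) = (c+rd)/(r+1)$ and then invoke \eqref{tre-eq:10}.
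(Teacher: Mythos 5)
Your proof is correct and follows essentially the same route as the paper's (which simply computes $u_i=\frac{a+rb}{r+1}$, $u_j=\frac{c+rd}{r+1}$ and compares these with the bound on the utilities of $j$'s defecting neighbors), with your version spelling out the imitation rule for $i$ and $j$ explicitly. Your observation that the value $u_j=\frac{c+rd}{r+1}$ rests on the tacit assumption (visible only in Figure \ref{tre-fig:10}) that each boundary defector has exactly $r$ defecting neighbors besides $i$ is exactly the reading the paper's proof relies on as well.
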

\begin{proof}
	The defecting neighbors of $i$ have utility $\frac{c+rd}{r+1}$, and
	their defecting neighbors have utility smaller than $\frac{a+rb}{r+1}$. Both of these
	quantities are lower then the utility of $i$, which is $\frac{a+rb}{r+1}$.
\end{proof}

See Figure \ref{tre-fig:10} for an illustration of Lemma \ref{lem:claim1}.

\begin{lemma}\label{lem:claim2}
	Consider parameters $(a,b,c,d) \in \mathcal{P}$ such that
	\begin{align}
	\label{tre-eq:20}
	a < \frac{rc+d}{r+1} \, .
	\end{align}
	Let $i$ be a vertex which is a boundary defector
	at time $t$.
	If $i$ is connected to one defector and $r$ boundary cooperators,
	then $i$ and all its neighbors will defect in the next time step.
	\begin{figure}[htbp]
		\centering
		\includegraphics[width=0.8\textwidth]{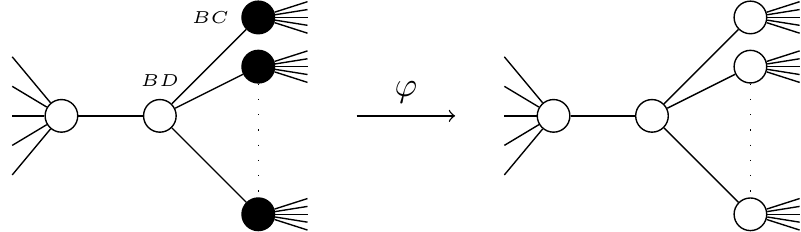}
		\caption{Illustration of the local situation in \cref{lem:claim2}.}
		\label{tre-fig:20}
	\end{figure}
\end{lemma}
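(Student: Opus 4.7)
The plan is to reduce the entire statement to a single utility comparison, exploiting the fact that $i$ itself is a high-utility defector sitting in the closed neighborhood of every vertex mentioned in the conclusion. First I would compute the utility of $i$: it is a defector with exactly $r$ cooperating neighbors and $1$ defecting neighbor, so the mean utility formula gives $u_i(t) = \frac{rc+d}{r+1}$, and hypothesis \eqref{tre-eq:20} rewrites as $u_i(t) > a$. Next I would record the trivial but crucial bound that for any cooperator $w$, the utility $u_w(t)$ is a convex combination of $a$ and $b$, hence $u_w(t) \le a$ (using $a > b$, which is part of HD). Combining these gives $u_i(t) > u_w(t)$ for every cooperator $w$ in the graph.

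With this in hand, I would argue vertex by vertex. Let $v \in \{i\} \cup N_1(i)$. Since either $v = i$ or $v \in N_1(i)$ (in which case $i \in N_1(v)$ by symmetry), we have $i \in N_1(v) \cup \{v\}$. Consequently
\begin{align*}
\max\{u_j(t) : j \in N_1(v) \cup \{v\}\} \ge u_i(t) > a \ge u_w(t)
\end{align*}
for every cooperator $w \in N_1(v) \cup \{v\}$. Thus no cooperator attains the maximum, so every element of the argmax is a defector, which gives $A_v(X(t)) = \{0\}$. The imitation rule then returns $\varphi_v(t+1,t,X(t)) = 0$, so $v$ is a defector at time $t+1$. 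Applying this to $v = i$ and to each of the $r+1$ neighbors of $i$ (the one defector plus the $r$ boundary cooperators) yields the conclusion.

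The only point that needs a little care is the requirement $|A_v| = 1$ in the imitation rule, but this is automatic from the above: the argmax consists entirely of defectors, so $A_v$ is the single strategy $\{0\}$ regardless of how many vertices tie for the maximum utility. I do not anticipate a real obstacle here; the lemma is essentially a one-line calculation once one notices that the defector $i$ simultaneously dominates every neighborhood in which it appears.
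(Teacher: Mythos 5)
Your proof is correct and follows essentially the same route as the paper: the single comparison $u_i(t)=\frac{rc+d}{r+1}>a\ge u_w(t)$ for every cooperator $w$, applied to each closed neighborhood containing $i$. You merely spell out the argmax/singleton detail ($A_v=\{0\}$) that the paper's one-line proof leaves implicit, which is fine (and note $a>b$ already follows from admissibility of $\mathcal{P}$, not just the HD ordering).
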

\begin{proof}
	At time $t$, the vertex $i$ has utility $\frac{rc+d}{r+1}$ which
	is larger then $a$, the largest utility that a cooperator can achieve.
\end{proof}

See Figure \ref{tre-fig:20} for an illustration of Lemma \ref{lem:claim2}.

Let $\mathcal{X} = (X(0), X(1), \dots )$ be the trajectory
of the evolutionary game on a graph described
above with initial state $X_0$. We start with some simple observations.

\begin{lemma}
\label{lem:claim3}
	Let $i$ be an ordinary vertex and let $t \in \N$.
	All children of $i$ have the same state at time $t$.
\end{lemma}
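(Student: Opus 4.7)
My plan is to leverage the symmetry of the construction. For any ordinary vertex $i$ and any two of its children $c_1,c_2$, I will exhibit a graph automorphism $\Phi$ of $\mathcal{G}$ that swaps $c_1$ and $c_2$ while fixing every vertex outside the subtrees rooted at these two children. Because $X_0$ depends only on the level of a vertex and $\Phi$ preserves levels, $X_0$ is $\Phi$-invariant, and because the mean-utility imitation update $\varphi$ is defined purely in terms of adjacency and strategies, it commutes with every graph automorphism. A one-line induction on $t$ then yields $X(t) \circ \Phi = X(t)$, and evaluating this identity at $c_1$ gives $X(t)_{c_2} = X(t)_{c_1}$.

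The nontrivial step is to verify that the subtrees rooted at $c_1$ and $c_2$ are isomorphic as rooted subgraphs of $\mathcal{G}$. This is precisely where the hypothesis that $i$ is ordinary enters. First, since $i \notin H$, the special child $h_{m+1}$ of $h_m$ has parent $h_m \neq i$, so no child of $i$ is a special vertex. Second, the unique level-$(q-1)$ leaf in the branch of $i$ is the descendant $h_{q-1}$ of $h_{q-2}$, which is not a descendant of $i$. Consequently each child of $i$ roots a full $r$-nary subtree of the same depth, with leaves occurring only at the very bottom. A rooted-tree isomorphism $\psi$ between the subtrees at $c_1$ and $c_2$ therefore exists, and defining $\Phi$ to equal $\psi$ on $c_1$'s subtree, $\psi^{-1}$ on $c_2$'s subtree, and the identity elsewhere produces the required automorphism: the only edges crossing the boundary are $\{i,c_1\}$ and $\{i,c_2\}$, which $\Phi$ merely swaps.

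I expect the only genuine obstacle to be this structural check on the shape of the descendant subtrees; it hinges on the fact that the irregular level-$(q-1)$ leaves are confined to the special chain $H$, so ordinariness of $i$ rules them out of its descendants. Once $\Phi$ has been constructed, equivariance of $\varphi$ and $\Phi$-invariance of $X_0$ are immediate from the definitions, and the inductive step $X(t+1) = \varphi(t+1,t,X(t))$ transports the symmetry from time $t$ to time $t+1$ without further work.
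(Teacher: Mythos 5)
Your proposal is correct and follows essentially the same route as the paper: an automorphism of $\mathcal{G}$ exchanging the two children, invariance of $X_0$, and equivariance of the imitation dynamics, propagated by induction along the trajectory. The only difference is that you spell out the construction of the automorphism and the check that the two child subtrees are isomorphic (using that the irregular level-$(q-1)$ leaves lie on the special chain), a verification the paper's proof leaves implicit.
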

\begin{proof}
	This follows directly by a symmetry argument. For
	every pair of children $j_1$ and $j_2$ of $i$
	there is an automorphism of the graph $\mathcal{G}$
	that exchanges $j_1$ and $j_2$.
	The initial state and the functions
	defining the dynamics are invariant under such
	automorphisms of the graph, hence the same
	must hold for every state in the trajectory.
\end{proof}

\begin{lemma}
\label{lem:claim4}
	Let $i$ be an ordinary vertex.
	If $X_i(t)=0$, then $X_j(t+1)=0$ for
	all children $j$ of $i$.
\end{lemma}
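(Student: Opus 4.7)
The plan is to reduce \cref{lem:claim4} to a utility comparison in the closed neighborhood of a single child $j$ of $i$, use \cref{lem:claim3} to constrain the possible local configurations, and then dispatch each configuration using the HD parameter inequality that makes \cref{lem:claim2} effective.

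First I would apply \cref{lem:claim3} to the ordinary vertex $i$ to conclude that all children of $i$ share a common state $x \in \{0,1\}$ at time $t$. By the graph automorphism that permutes the children of $i$ (which preserves $X_0$ and hence the entire trajectory), it suffices to prove $X_j(t+1)=0$ for a single fixed child $j$. If $j$ has children (that is, $j$ is an ordinary non-leaf), a second application of \cref{lem:claim3} forces them to share a common state $y \in \{0,1\}$; otherwise $j$ is a leaf, which is the simplest case. This leaves only a short finite list of local configurations of $N_1(j)\cup\{j\}$ to check.

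I would then compute the utilities of all vertices in $N_1(j)\cup\{j\}$ in each configuration. The key observation is that $u_i(t)$ is essentially determined by $x$: when $x=1$, all children of $i$ are cooperators, so $u_i(t) \geq (rc+d)/(r+1)$, and the parameter choice of \cref{lem:claim2} guarantees $(rc+d)/(r+1) > a$; when $x=0$, both $i$ and $j$ are defectors, so the defector strategy already appears in $A_j(X(t))$. Since every cooperator's utility is bounded above by $a$ (as an average of payoffs at most $a$), the resulting comparisons show that either $A_j(X(t))=\{0\}$, in which case $j$ copies a defector, or $A_j(X(t))=\{0,1\}$, in which case $X_j(t+1)=X_j(t)$ and one checks directly that this state is already $0$. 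Either way, $X_j(t+1)=0$.

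The main obstacle is the configuration $x=y=1$, in which $j$ is a cooperator whose children are all cooperators: the only defector in $N_1(j)\cup\{j\}$ is the parent $i$, and $j$ will flip to $0$ only if $A_j(X(t))=\{X_i\}$ as a singleton. This requires the strict chain
\begin{align*}
u_i(t) \;\geq\; \frac{rc+d}{r+1} \;>\; a \;\geq\; \max\bigl\{u_v(t):\, v \in N_1(j)\cup\{j\},\; X_v(t)=1\bigr\},
\end{align*}
whose middle inequality is exactly the HD condition underlying \cref{lem:claim2}. The remaining configurations follow from more elementary comparisons, using in particular $(rc+d)/(r+1) > (c+rd)/(r+1)$ for $r>1$ to dispatch the sub-case where $j$ is a defector with cooperating children.
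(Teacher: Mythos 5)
Your proof is correct and follows essentially the same route as the paper: use \cref{lem:claim3} to reduce to a short list of local configurations of a child $j$ and its children, then dispatch each via the utility comparison $\tfrac{rc+d}{r+1}>a$ underlying \cref{lem:claim2} (your cases $x=1$, $x=0$ with cooperating grandchildren, and $x=0$ with defecting grandchildren or leaf correspond exactly to the paper's ``cooperators / boundary defectors / inner defectors'' trichotomy). Your version is merely more explicit, in particular in bounding $u_i(t)\geq\tfrac{rc+d}{r+1}$ directly rather than citing \cref{lem:claim2} as a black box.
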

\begin{proof}
	Based on Lemma \ref{lem:claim3} we have to differentiate between only three cases.
	In the first case all children of $i$ are cooperators.
	By \cref{lem:claim2} they will switch to defection.
	In the second case they are boundary defectors.
	Therefore all of their children must be cooperators
	and again \cref{lem:claim2} shows that they will switch to
	defection. In the last case the children are inner defectors
	which can not change their strategy.
\end{proof}

The dynamics along the special vertices is very simple to describe.
Let $f: \{0, \ldots, 2q-6 \} \to \N$ be the function given by $f(t):= |t-q+3|+1$,
see \cref{fig:function-graph}.
\begin{figure}
	\begin{center}
		\includegraphics{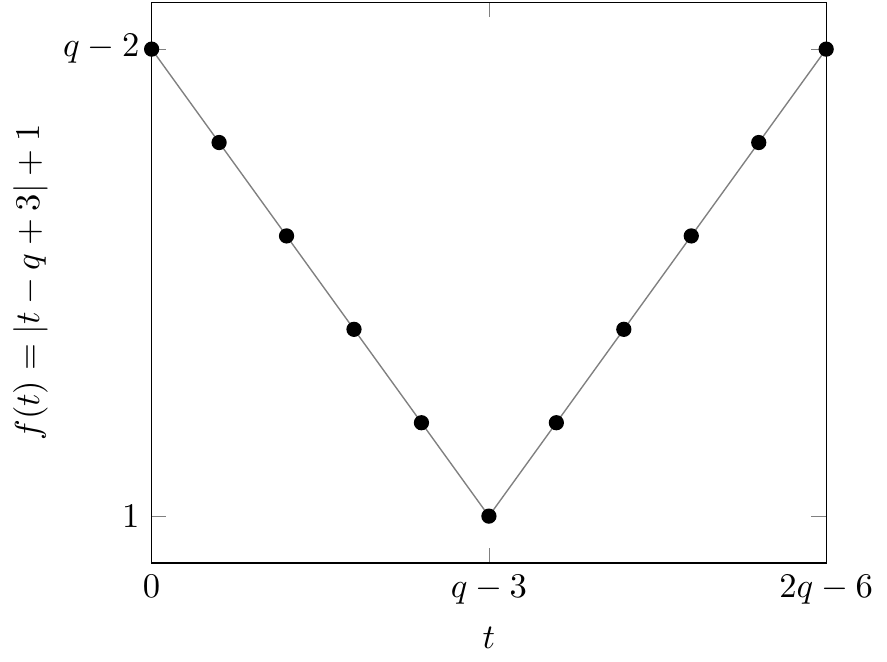}
	\end{center}
	\caption{The function $f$ governing the shrinking and expansion of
		cooperation among the special vertices for $q=8$.}
	\label{fig:function-graph}
\end{figure}
A special vertex $h_\ell$ is cooperating at time $t$ if
and only if $\ell \leq f(t)$.
This is shown together with a description of the dynamics of the
strategies of the ordinary vertices in the following theorem.
Notice that property \eqref{item:growing-phase-cooperators}
and property \eqref{item:growing-phase-defectors} in Theorem \ref{tre-thm:02} immediately
imply that $\mathcal{X}$ has period $2q-6$
and property \eqref{item:strategy-special}
implies that $\mathcal{X}$ has no shorter period.

\begin{theorem}
	\label{tre-thm:02}
	The following invariants hold for the dynamics when $0 \leq t \leq 2q-6$
	\begin{enumerate}[(a)]
		\item
		\begin{align*}
		X_{h_\ell}(t)=\begin{cases}
		1 & \text{if }\ell \leq f(t) \\
		0 & \text{otherwise}
		\end{cases}, && \text{for all } h_\ell \in H_\ell.
		\end{align*}
    \label{item:strategy-special}
		\item $h_\ell$ is an inner cooperator if and only if $\ell <
		f(t)$. \label{item:special-IC}
		\item $h_\ell$ is an inner defector if and only if $\ell > f(t)+1$.
		\label{item:special-ID}
	\end{enumerate}
	In the shrinking phase $(0\leq t < q-3)$ additionally the following
	properties hold.
	\begin{enumerate}[(a)]
		\setcounter{enumi}{3}
		\item For $m \leq f(t)+1$ and $i \in I_m \cap N_1(h_m)$ we
		have $X_i(t)=1$. \label{item:shrinking-phase-cooperators}
		\item For $m > f(t)+1$ and $i \in I_m \cap N_{\leq m-f(t)-1}(h_m)$ we
		have $X_i(t)=0$. \label{item:shrining-phase-defectors}
	\end{enumerate}
	In the expanding phase $(q-3 \leq t \leq 2q-6)$, we have
	\begin{enumerate}[(a)]
		\setcounter{enumi}{5}
		\item All vertices at level at most $f(t)$ are
                  cooperating. \label{item:growing-phase-cooperators}
		\item All vertices at level $n$ with $f(t) < n \leq
                  f(t)+3$ are defecting. \label{item:growing-phase-defectors}
	\end{enumerate}
\end{theorem}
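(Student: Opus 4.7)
The plan is to induct on $t$, treating the shrinking phase ($0\le t<q-3$) and the expanding phase ($q-3\le t<2q-6$) separately, and then to deduce periodicity and minimality from property \cref{item:strategy-special}.

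For the base case $t=0$, since $f(0)=q-2$ the initial condition immediately yields \cref{item:strategy-special}; \cref{item:special-IC,item:special-ID} follow by inspecting the neighbourhoods of each $h_\ell$, and \cref{item:shrinking-phase-cooperators,item:shrining-phase-defectors} reduce to the observation that $I_m\cap N_1(h_m)$ consists precisely of the $r-1$ ordinary children of $h_m$, whose states are read off directly from $X_0$.

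For the shrinking inductive step I would apply \cref{lem:claim2} at the special vertex $h_{f(t)+1}$: by \cref{item:special-ID} its special child $h_{f(t)+2}$ is an inner defector (the ``one defector'' demanded by the lemma), while by \cref{item:strategy-special,item:shrinking-phase-cooperators} its remaining $r$ neighbours (its parent $h_{f(t)}$ and its $r-1$ ordinary children) are boundary cooperators. Hence $h_{f(t)+1}$ and all its neighbours switch to defection at $t+1$. The same analysis runs in parallel on every ordinary boundary cooperator at the same depth; by \cref{lem:claim3} this reduces to a single representative subtree, and \cref{lem:claim4} then pushes defection one further level in, which is exactly \cref{item:shrining-phase-defectors} with $f(t+1)=f(t)-1$. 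For the expanding phase I would use \cref{lem:claim1} symmetrically at $h_{f(t)}$: the hypothesis requiring the neighbouring defectors to have utility smaller than $\frac{a+rb}{r+1}$ is granted by \cref{item:growing-phase-defectors}, since those defectors see only other defectors at levels $f(t)+2$ and $f(t)+3$. Cooperation then spreads outward by one level, and \cref{lem:claim3} extends the conclusion throughout the symmetric subtree.

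The main obstacle I expect is the transition step $t=q-3\to q-2$, where the two phases meet. At $t=q-3$ only $h_0$ and $h_1$ cooperate among the special vertices, yet the informal discussion preceding the theorem indicates that several isolated clusters of cooperating ordinary vertices still survive in the outer levels; one must verify simultaneously that these outer clusters are wiped out by one last application of \cref{lem:claim2} while the configuration around $h_1$ satisfies the hypotheses of \cref{lem:claim1}, so that $h_2$ together with its ordinary neighbours freshly adopts cooperation. A secondary subtlety lies at the leaf level $q-1$, where the ``one defector'' demanded by \cref{lem:claim2} has to be played by the leaf $h_{q-1}$ rather than by a deeper inner defector. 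Once both phases are verified, the equality $X(t+2q-6)=X(t)$ follows from $f(2q-6)=f(0)$ and determinism of $\varphi$, and minimality is forced by \cref{item:strategy-special} because $f(t)=q-2$ occurs only at $t\equiv 0\pmod{2q-6}$.
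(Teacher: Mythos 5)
Your plan follows the same route as the paper: induction on $t$ through the shrinking and expanding phases, driven by \cref{lem:claim1,lem:claim2,lem:claim3,lem:claim4}, with periodicity coming from $f(2q-6)=f(0)$ and minimality from \eqref{item:strategy-special}. The genuine problem sits exactly where you locate your main obstacle. Your phase-switch step requires that the surviving outer clusters of cooperating ordinary vertices be ``wiped out by one last application of \cref{lem:claim2}'' at $t=q-3$. That is false on a substantial part of the admissible parameter region: whenever $b>\frac{c+rd}{r+1}$ (for instance $(a,b,c,d)=(1,0.6,2,0)$ with $r=3$, which satisfies \eqref{tre-eq:10} and \eqref{tre-eq:20}), the outer clusters survive the phase switch and keep spreading towards the leaves during the expanding phase; this is precisely the behaviour discussed in the example of Section \ref{tre-sec:example}, and the informal description preceding the theorem already says the outer clusters are only gone around time $(q-3)+(q-5)$. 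The invariants of \cref{tre-thm:02} are deliberately silent about these clusters, and the proof has to coexist with them rather than eliminate them.

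Because of this false premise, your expanding-phase step does not establish \eqref{item:growing-phase-defectors} at time $t+1$, which is the only place where the outer clusters could interfere: properties \eqref{item:growing-phase-cooperators} and \eqref{item:growing-phase-defectors} at time $t$ say nothing about level $f(t)+4$, yet \eqref{item:growing-phase-defectors} at time $t+1$ requires that level to defect. The paper closes this by showing that a vertex at level $f(t)+4$ is, at time $t$, either an inner defector or a defector all of whose children cooperate, so it keeps defecting by the argument of \cref{lem:claim2}, with a separate treatment of children of special vertices; your sketch has no substitute for this step. A smaller imprecision: in the expanding phase \cref{lem:claim1} must be applied at \emph{every} vertex of level $f(t)$, and neither \cref{lem:claim3} nor a symmetry argument transports the conclusion from $h_{f(t)}$ to the off-path vertices (their subtrees are not isomorphic to the subtree of $h_{f(t)}$, which contains the level-$(q-1)$ leaf); instead one checks the hypotheses of \cref{lem:claim1} directly at each such vertex from \eqref{item:growing-phase-cooperators} and \eqref{item:growing-phase-defectors}, as the paper does. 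Similarly, in the shrinking phase the preservation of \eqref{item:shrinking-phase-cooperators} is not automatic and needs the dichotomy (children all defecting versus all cooperating, via \cref{lem:claim3} and \cref{lem:claim1}) used in the paper, which your parallel-front description omits.
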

\begin{proof}
  We show by induction that these invariants are true throughout the
  course of the dynamics.  Let $s \in \N$ and assume that the theorem
  holds for all $t \leq s$. \\

  \paragraph{Initial state; i.e.\ $s=0$:}
  Obviously, all of the points \eqref{item:strategy-special} -
  \eqref{item:shrining-phase-defectors} hold true.  Since all of the
  vertices $h_\ell$ for $\ell \leq q-3$ are inner cooperators by
  \eqref{item:special-IC}, they preserve their strategy at time $1$.
  The defecting vertex $h_{q-1}$ has utility $c$.  Thus, the
  vertex $h_{q-2}$ changes its strategy to defection at time $s+1$
  while changing the strategy of vertices in
  $I_{q-2} \cap N_{1}(h_{q-2})$ to cooperation as a consequence of
  Lemma \ref{lem:claim1}.  Every other vertex preserve its strategy at
  time $s=0$ and thus, the points \eqref{item:strategy-special} -
  \eqref{item:shrining-phase-defectors}
  hold true at time $1$. \\

  \paragraph{Shrinking phase; i.e.\ $0 < s <q-3$:}
  The vertex $h_{f(s)+1}$ is defecting and has one defecting neighbor
  $h_{f(s)+2}$ by \eqref{item:strategy-special}.  The children of
  $h_{f(s)+1}$ are cooperating by
  \eqref{item:shrinking-phase-cooperators}.  Thus, using Lemma
  \ref{lem:claim2}, the vertex $h_{f(s)+1}$ and all of his neighbors
  are defecting in the next time step.  Together with
  \eqref{item:special-ID}, this proves the point
  \eqref{item:strategy-special} for time $s+1$.  Using
  \eqref{item:shrinking-phase-cooperators}, this also immediately
  implies \eqref{item:special-IC} (the boundary cooperators closest to
  the root $h_0$ of the cluster containing $h_0$ are at level
  $q-2-s$).

  The vertices $h_\ell$ for $\ell \geq f(s)+1$ are inner defectors by
  \eqref{item:special-ID}.  Moreover, their children are all defecting
  by \eqref{item:shrining-phase-defectors}.  Thus, $h_\ell$ stay inner
  defectors for $\ell \geq f(s)+1$.  The vertex $h_{f(s)}$ is a
  boundary defector by \eqref{item:special-IC} and has $r$ cooperating
  neighbors (\eqref{item:shrinking-phase-cooperators} and
  \eqref{item:strategy-special}).  Lemma \ref{lem:claim2} implies the
  vertex $h_{f(s)}$ is an inner defector at time $s+1$ which is
  \eqref{item:special-ID} for the next time step.

  The invariant \eqref{item:strategy-special} implies that the
  predecessors of all vertices in $I_m \cap N_1(h_m)$ are cooperating
  for $m \leq f(s)+1$. The children of a specific vertex $v$ in
  $I_m \cap N_1(h_m)$ are either all defecting (\cref{lem:claim3})
  and then \cref{lem:claim1} ensures the preservation
  of cooperation in $s+1$. If the children of $v$ are cooperating then
  the
  vertex $v$ is an inner cooperator and preserves its strategy.

  As a trivial consequence of \cref{lem:claim4}, if all vertices
  in $I_m \cap N_{\leq m-f(s)-1}(h_m)$ are defecting for $m > f(s)+1$
  then all vertices in $I_m \cap N_{\leq m-f(s)}(h_m)$ are defecting
  in the next time step. Furthermore, by \eqref{item:special-ID} and
  \eqref{item:shrinking-phase-cooperators} we can apply
  \cref{lem:claim1} to the vertex $h_{q+s-2}$. This gives
  \eqref{item:shrining-phase-defectors}.\\

  \paragraph{Phase switch; i.e.\ $s=q-2$:}
  We already established
  \eqref{item:strategy-special} - \eqref{item:shrining-phase-defectors}
  at time $s+1$.
  We still have to show, that \eqref{item:growing-phase-cooperators}
  and \eqref{item:growing-phase-defectors} hold at time $s+1$.
  We have $f(q-3)=1$. There are no ordinary vertices at level one, hence \eqref{item:shrinking-phase-cooperators}
  holds at time $s+1$ by \eqref{item:strategy-special}.
  This also shows \eqref{item:growing-phase-defectors} for special
  vertices. There is also no ordinary vertex at level two and three, hence we
  only have to show \eqref{item:growing-phase-defectors} for ordinary
  vertices at level four. They are contained in $N_{\leq m-2} \cap
  I_m$ for some $m=3$, hence they are defecting at time $s+1$ by \eqref{item:shrining-phase-defectors}. \\

  \paragraph{Growing phase; i.e.\ $q-3 \leq s <2q-6$:}
  \cref{lem:claim1} together with
  \eqref{item:growing-phase-cooperators} and
  \eqref{item:growing-phase-defectors} implies that all vertices at
  level at most $f(s)+1$ will cooperate at time $s+1$, hence
  \eqref{item:growing-phase-cooperators} holds.  This also implies
  \eqref{item:special-IC}.

  The special vertices $h_{f(s)+2}, \dots, h_{q-1}$ are inner
  defectors by \eqref{item:special-ID} and hence also defect at time
  $s+1$.  Therefore \eqref{item:strategy-special} is satisfied.  If
  $f(s)+3<q$, property \eqref{item:growing-phase-defectors}
  automatically holds at time $s+1$.  Consider $s$ with $f(s)+3<q$.
  An ordinary vertex at level $f(s)+4$ is either an inner defector at
  time $s$ and hence defects at time $s+1$ or it has only cooperating
  children and hence defects by \cref{lem:claim2}.  All in all this
  shows that \eqref{item:growing-phase-defectors} is also fulfilled.
  Let $v$ be a child of a special vertex $h_{\ell}$ with
  $\ell>s+1$. By \eqref{item:special-ID} it is defecting at time $s$.
  Either it is an inner defector and hence also defects at time $s+1$
  or all its children are cooperators and it defects at time $s+1$ by
  \cref{lem:claim2}.  This established in particular that $h_{\ell}$
  is an inner defector at time $s+1$, in other words,
  \eqref{item:growing-phase-defectors}.
\end{proof}

\subsubsection{Parameter choice}
The only assumptions we needed in the dynamics section were the
inequalities \eqref{tre-eq:10}, \eqref{tre-eq:20} and the assumption
that the parameters $(a,b,c,d)$ satisfy the conditions of the HD scenario $(c>a>b>d)$.
Let such $a,b,c,d$ be given.
Clearly, $r$ can be chosen great enough such that the inequalities \eqref{tre-eq:10} and \eqref{tre-eq:20} hold.

The minimal period of the constructed trajectory is $2(q-3)$. Setting $q:= \max \{ 5, \left\lceil p_0/2 \right\rceil + 3 \}$ the period is at least $p_0$.
\qed

\begin{remark}
  In the constructions in Sections \ref{ssec:ac} and \ref{ssec:ca}, the behaviour of the number
  of cooperators or more precisely the sequence $(|\{v \in V
  \setsep X_v(t) = 1 \} | )_{t \in \N_0}$ was rather boring. During one
  period of the trajectory it was growing and reset to the initial
  value at the end of the period. The behaviour of this sequence
  is much more interesting for our tree construction as shown in \cref{fig:card}.
\end{remark}

\begin{figure}[htbp]
\includegraphics[width=0.45\textwidth]{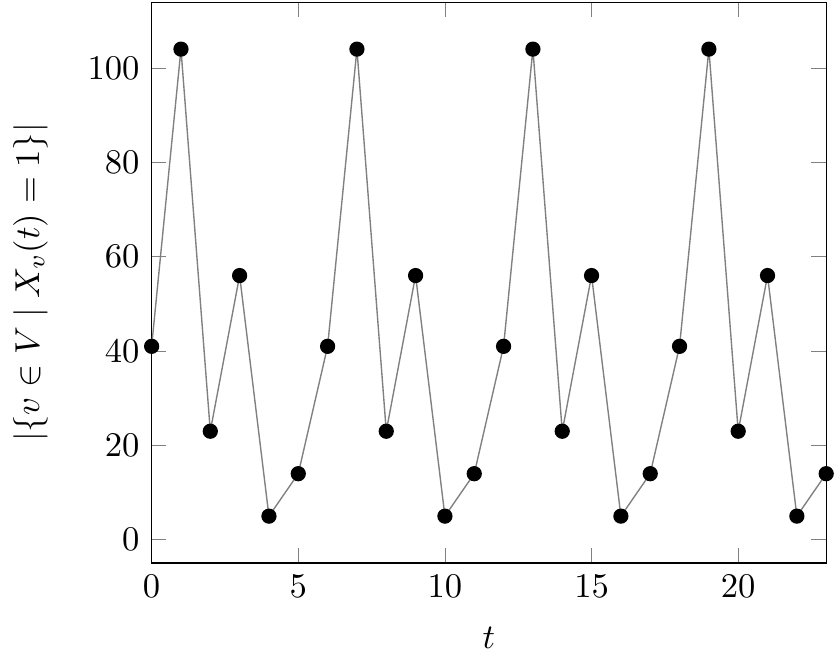}
\hspace{0.06\textwidth}
\includegraphics[width=0.45\textwidth]{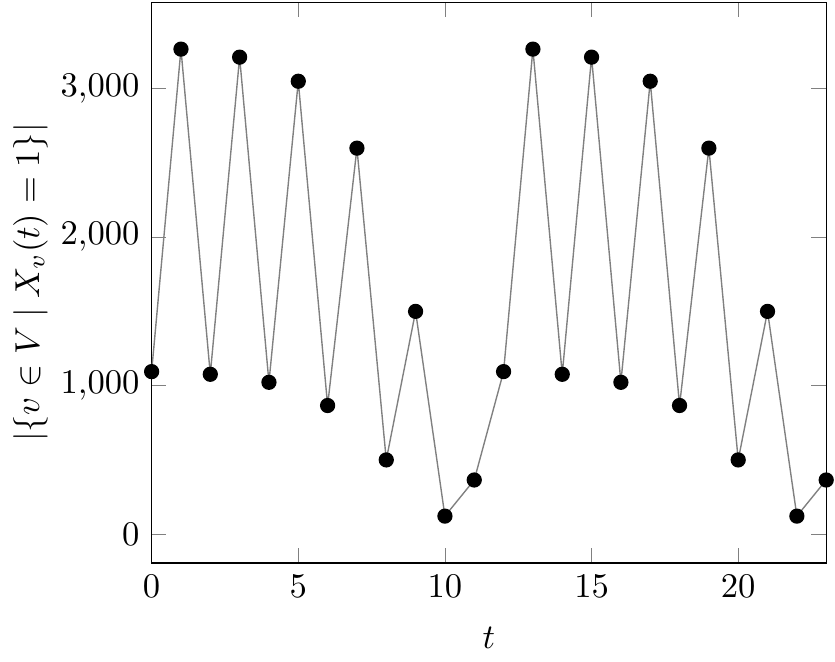}
\caption{Development of the number of cooperators for
  the evolutionary game on the tree $\mathcal{G}$ in Section \ref{sec:tre} with $r=3$ and
  game theoretic parameters $(a,b,c,d)=(1,0.7,2,0)$.
 On the left the tree has depth $q=6$, on the right $q=9$.}
\label{fig:card}
\end{figure}

\subsection{Example}
\label{tre-sec:example}
Figures \ref{tre-fig:ex0} -- \ref{tre-fig:ex5} depict an example of a
trajectory on an evolutionary game on a graph constructed in Section
\ref{tre-sec:proof}. Cooperators are depicted with black circles, defectors are depicted with white ones.
The players changing strategy in the current time step are highlighted with a dashed circle.
The parameters of this graph are $r=3, q=6$.  This trajectory can be
observed for example for parameter vector $(a,b,c,d) = (1,0.6,2,0)$
satisfying the inequalities \eqref{tre-eq:10}, \eqref{tre-eq:20}. Note
that the inequality
\begin{align}
\label{eq:tre-eq:30}
b > \frac{c+rd}{r+1}
\end{align}
holds for such a choice of parameters. Cooperation then spreads from
outer cooperators towards the leaves between $X(2)$ and $X(3)$.
In contrary, for $b \in (0,0.5)$ the inequality \eqref{eq:tre-eq:30}
does not hold anymore.  The outer cooperators (cooperators not in the
cluster containing the root $h_0$) then vanish in $X(3)$ and they do
not spread cooperation further.  The strategy vectors $X(t)$ and
$X(t+6)$ coincide for $t \in \mathbb{N}_0$. 

This example and an example of an evolutionary game on a graph with $q=7$ and all other parameters remaining the same can be found online in \cite{Epperlein2017}.

\begin{figure}[p]
\centering
\includegraphics[height=.43\textheight]{./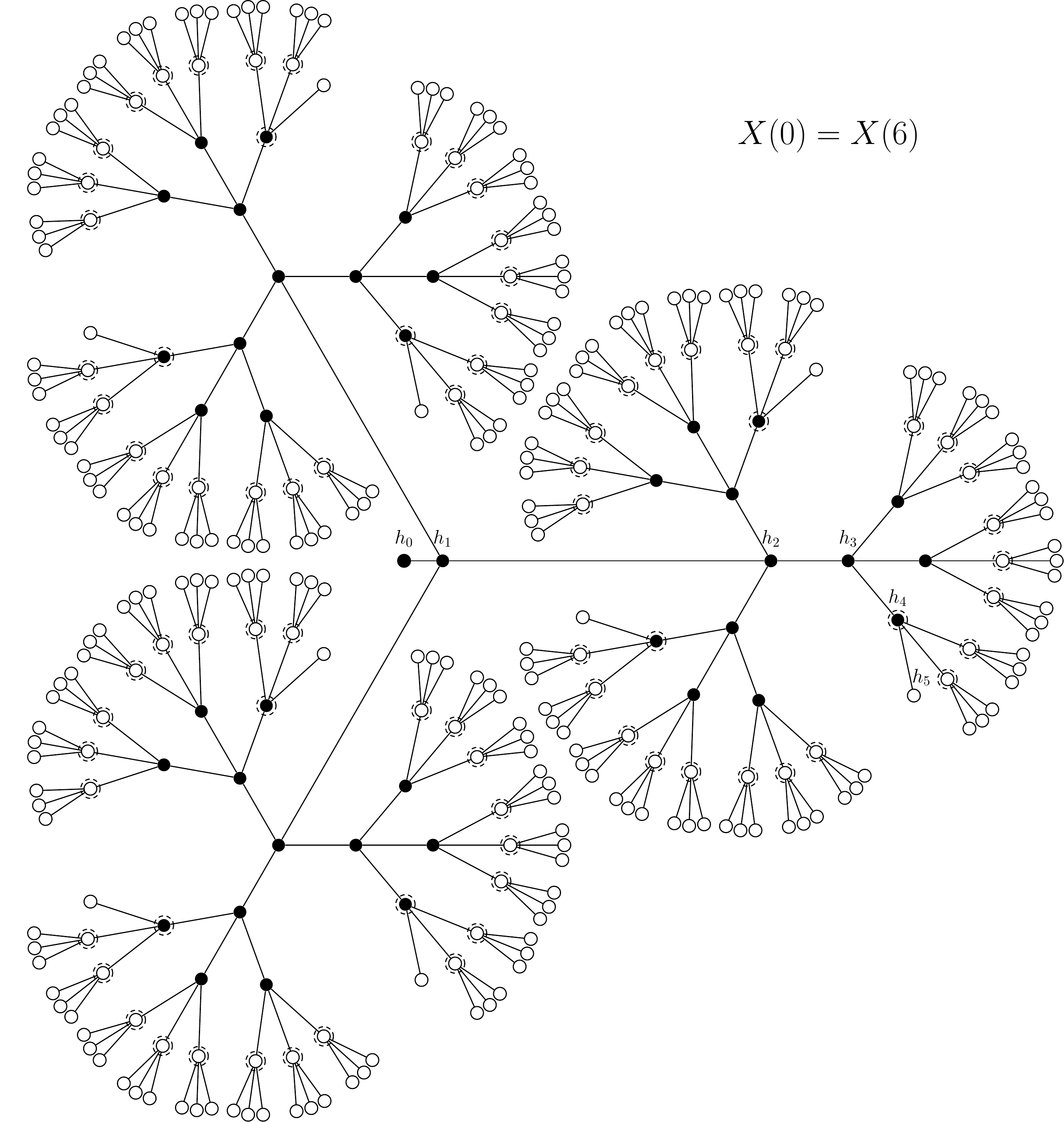}
\caption{The example from Section \ref{tre-sec:example}.}
\label{tre-fig:ex0}
\end{figure}
\begin{figure}[p]
\centering
\includegraphics[height=.43\textheight]{./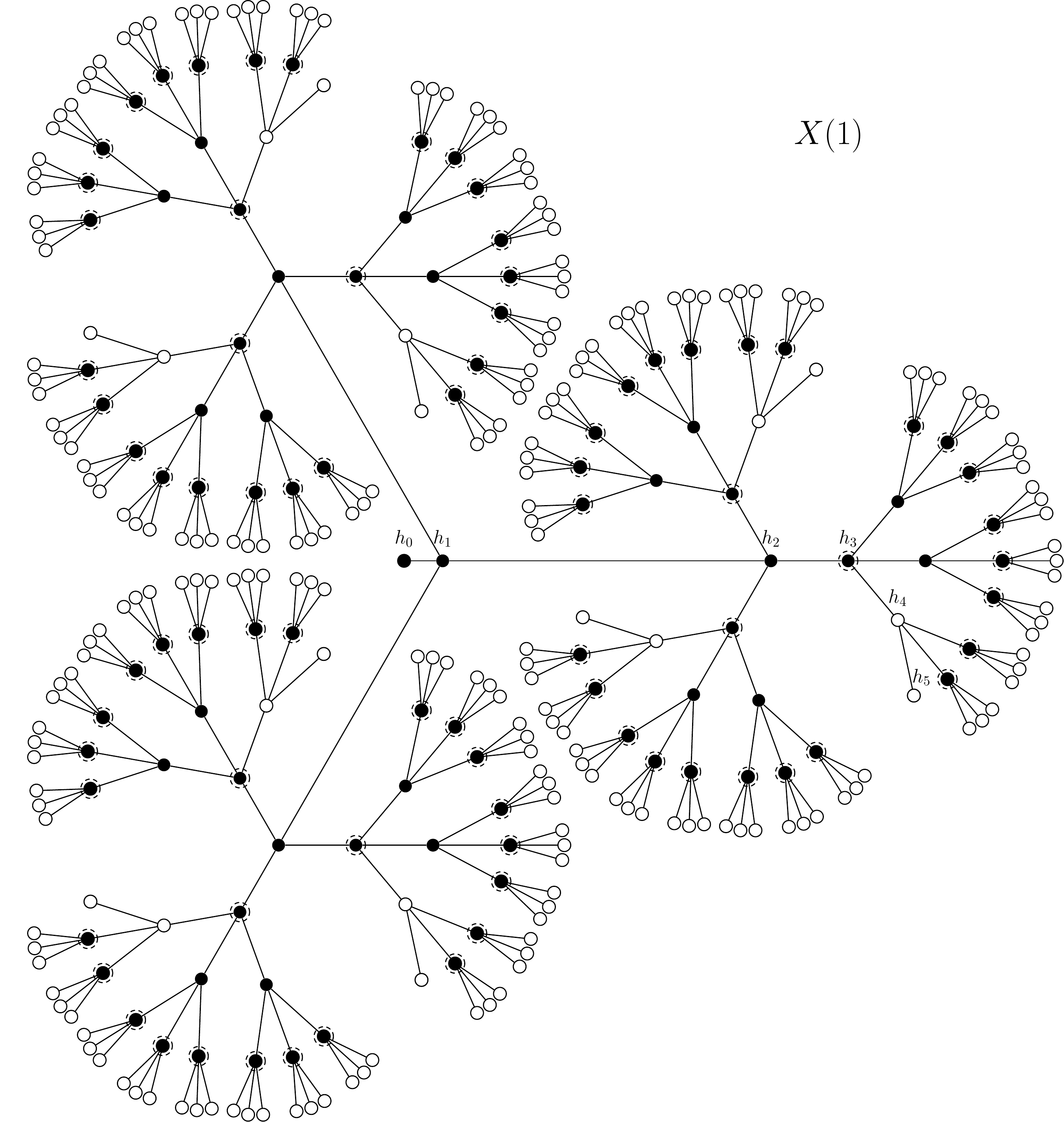}
\caption{The example from Section \ref{tre-sec:example}.}
\label{tre-fig:ex1}
\end{figure}
\begin{figure}[p]
\centering
\includegraphics[height=.43\textheight]{./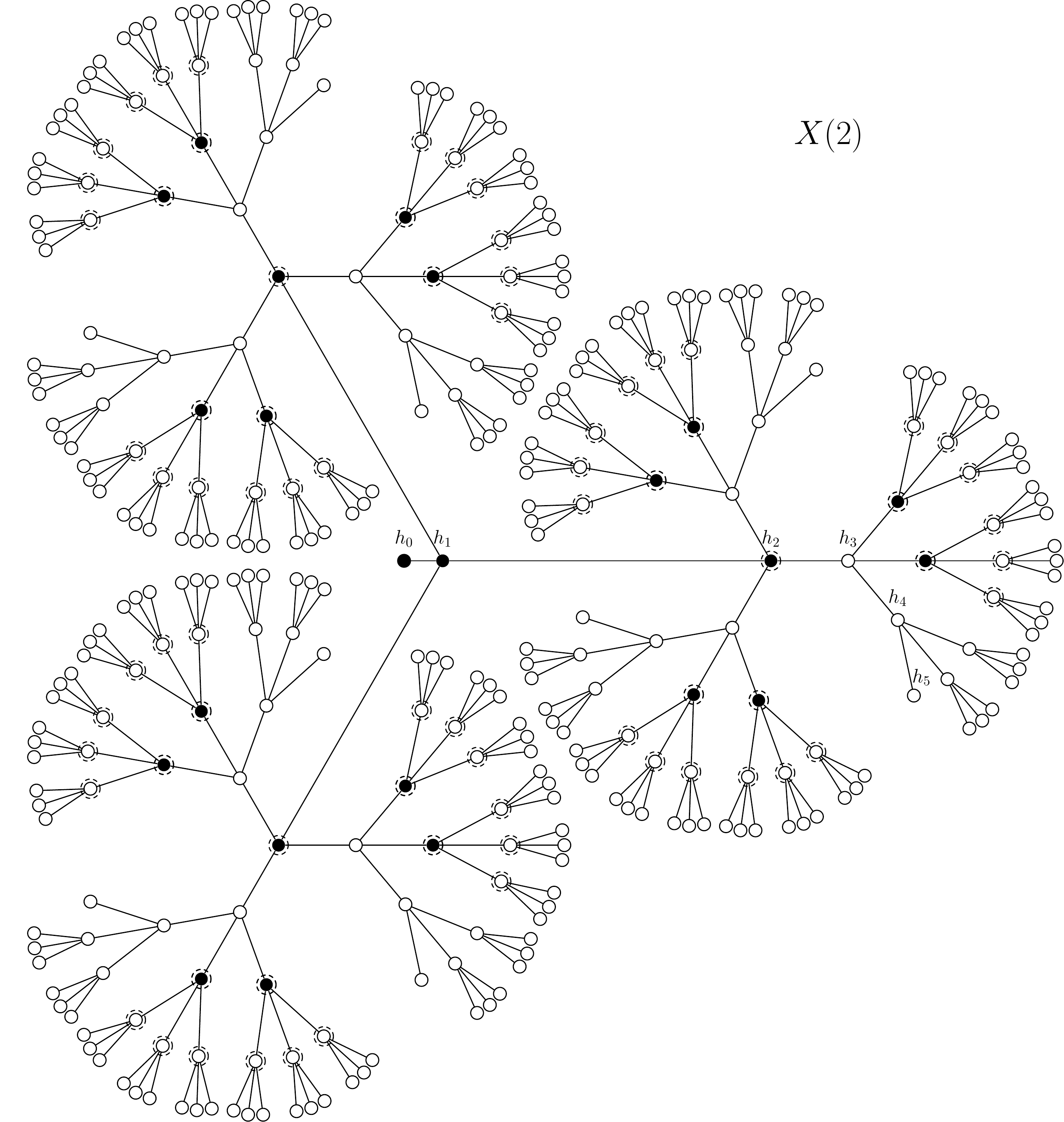}
\caption{The example from Section \ref{tre-sec:example}.}
\label{tre-fig:ex2}
\end{figure}
\begin{figure}[p]
\centering
\includegraphics[height=.43\textheight]{./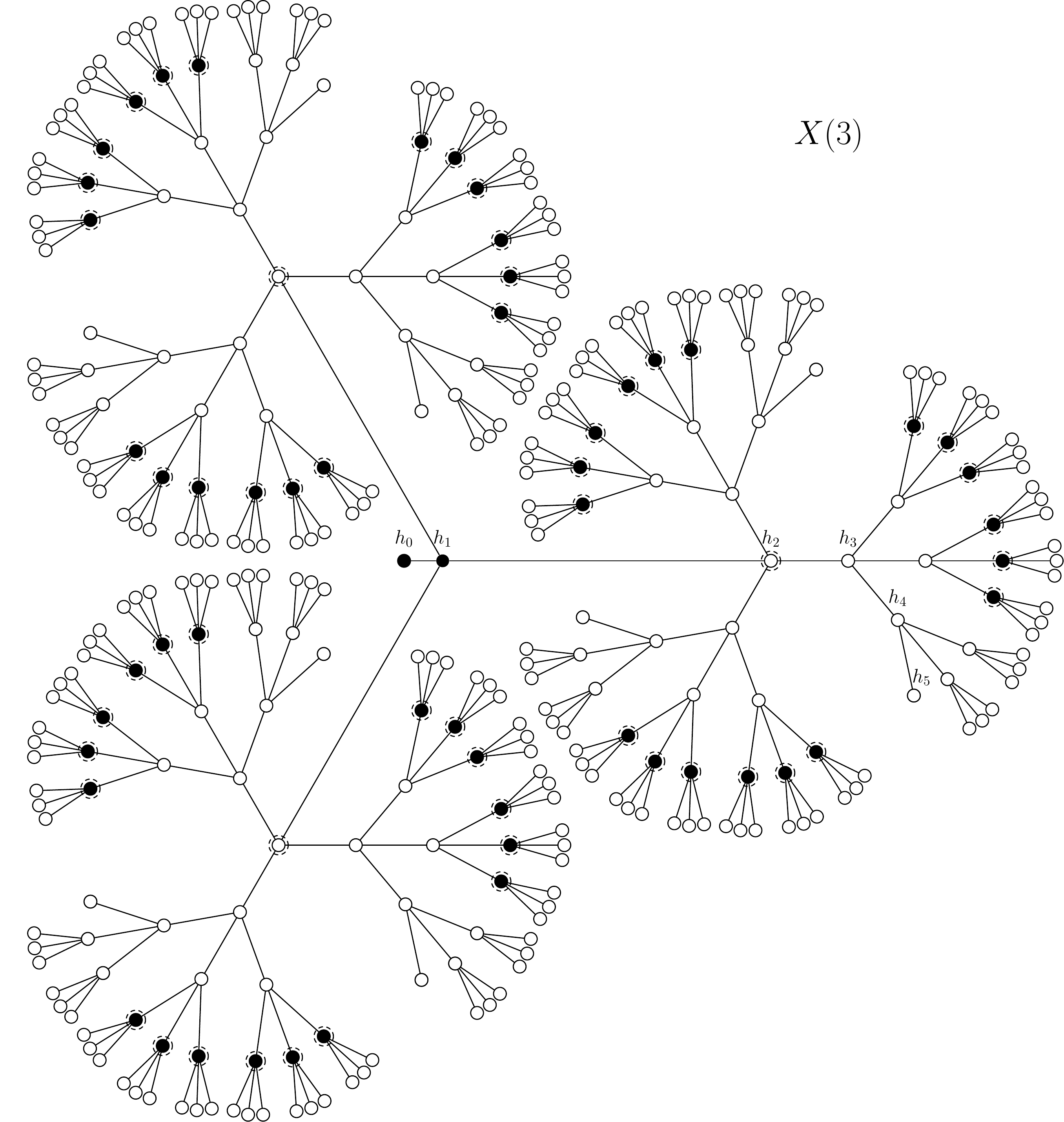}
\caption{The example from Section \ref{tre-sec:example}.}
\label{tre-fig:ex3}
\end{figure}
\begin{figure}[p]
\centering
\includegraphics[height=.43\textheight]{./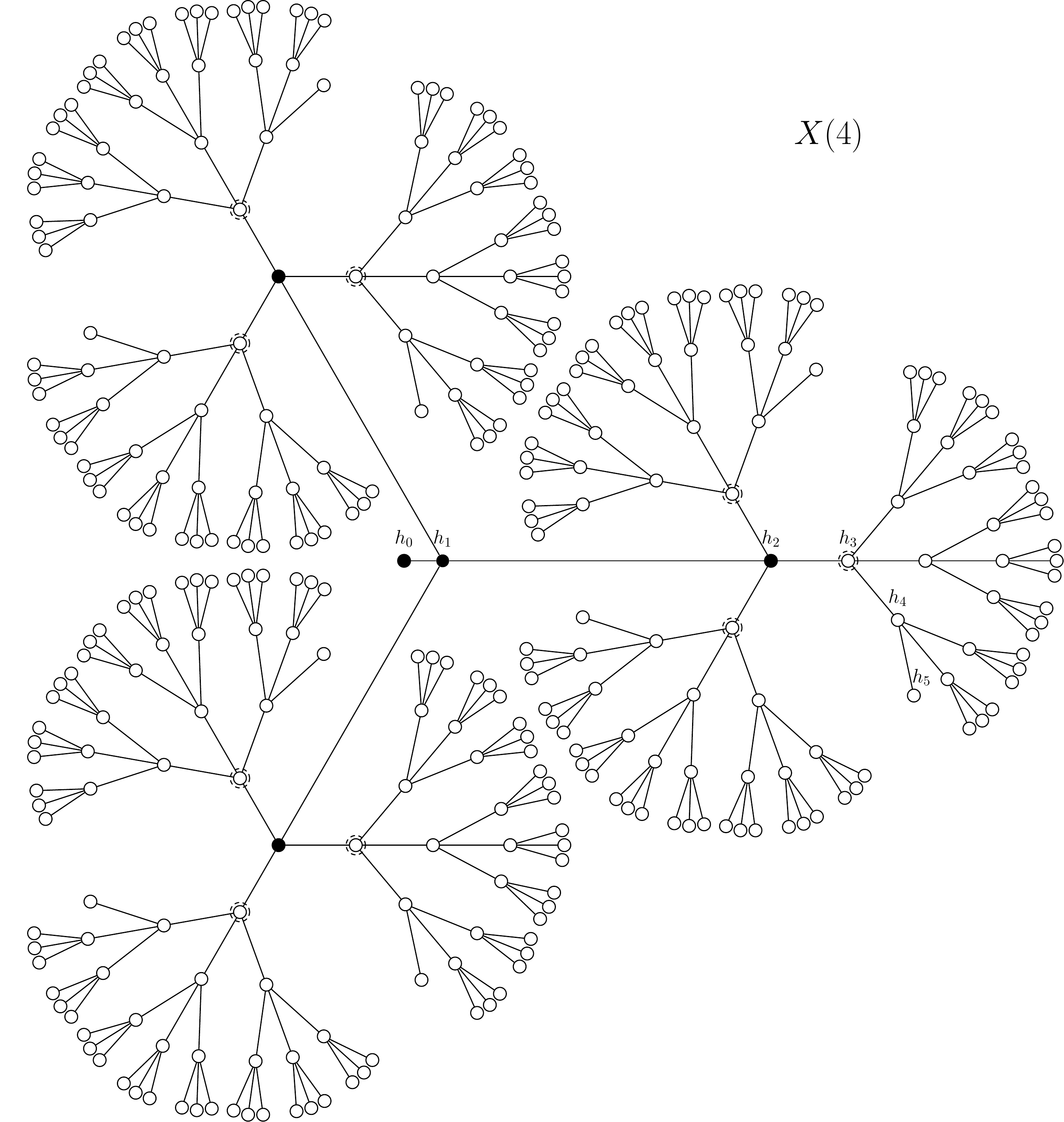}
\caption{The example from Section \ref{tre-sec:example}.}
\label{tre-fig:ex4}
\end{figure}
\begin{figure}[p]
\centering
\includegraphics[height=.43\textheight]{./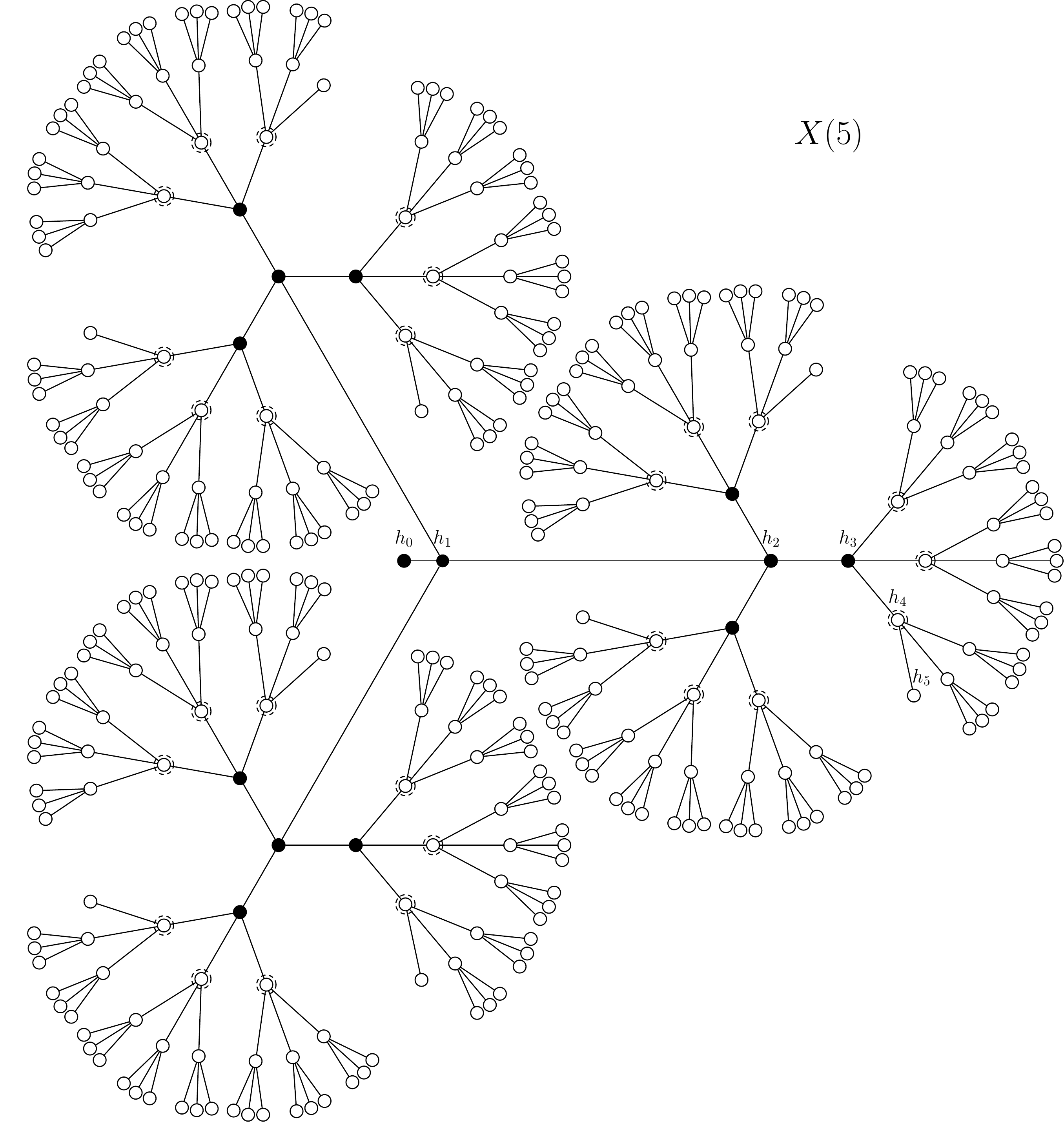}
\caption{The example from Section \ref{tre-sec:example}.}
\label{tre-fig:ex5}
\end{figure}
 
\section{Conclusion}
\label{sec:ccl}
We showed that on arbitrary graphs the game theoretic parameters
can not exclude periodic behavior with long periods.
Our proofs hold also true for a small perturbation of the game-theoretical parameters $a,b,c,d$ as a consequence of the generic payoff assumption.

Our constructions rely heavily on the fact that we can choose the graph
parameters arbitrarily.
This no longer works if we restrict to certain classes of graphs.
For example Theorem \ref{tre-thm:1} partially answers Question \ref{pre-que:1} while restricting to the parameters $(a,b,c,d)$ satisfying the conditions of the HD scenario, $c>a>b>d$, and the class of acyclic graphs.

Natural classes of graphs we might restrict ourselves to are $k$-regular graphs (every
vertex has exactly $k$ neighbors), vertex-transitive graphs (every
pair of vertices can be exchanged by a graph automorphism) or planar graphs (the
graph can be drawn in the plane without edge crossings).
This leads for example to the following question.
\begin{question}
  For which game theoretic parameters $(a,b,c,d)$ and positive integers $k, p$ is there
  a $k$-regular graph $\mathcal{G}$ such that the corresponding evolutionary
  game with synchronous update and imitation dynamics on $\mathcal{G}$ has a periodic trajectory with minimal period $p$?
\end{question}

\section*{Acknowledgments} The first author would like to acknowledge the
project LO1506 of the Czech Ministry of Education, Youth and Sports for
supporting his visit at the research centre NTIS – New Technologies for
the Information Society of the Faculty of Applied Sciences, University
of West Bohemia. The second author was supported by
the Grant Agency of the Czech Republic Project No. 15-07690S.

\providecommand{\href}[2]{#2}
\providecommand{\arxiv}[1]{\href{http://arxiv.org/abs/#1}{arXiv:#1}}
\providecommand{\url}[1]{\texttt{#1}}
\providecommand{\urlprefix}{URL }

\end{document}